\newtheorem{theorem}{Theorem}[section]
\newtheorem{corollary}[theorem]{Corollary}
\newtheorem{lemma}[theorem]{Lemma}
\newtheorem*{theorema}{Theorem A}
\newtheorem*{theoremb}{Theorem B}
\newtheorem*{theoremc}{Theorem C}
\theoremstyle{definition}
\numberwithin{equation}{section}
\begin{document}

\title{Fourier integral operators on Hardy spaces with amplitudes in forbidden H\"{o}rmander class\thanks{ This work was supported by the National Key Research and Development Program of China, grant number 2022YFA1005700
 and the National Natural Science Foundation of China, grant number 11871436.}}
\author{Xiaofeng Ye\hspace{1cm} Chunjie Zhang\hspace{1cm} Xiangrong Zhu\thanks{is the corresponding author.}}

\date{}

\maketitle

 \begin{abstract}
 In this note, we consider a Fourier integral operator defined by
 \begin{align*}
 T_{\phi,a}f(x)=\int_{\mathbb{R}^{n}}e^{i\phi(x,\xi)}a(x,\xi)\widehat{f}(\xi)d\xi,
 \end{align*}
 where $a$ is the amplitude, and $\phi$ is the phase.

 Let $0\leq\rho\leq 1,n\geq 2$ or $0\leq\rho<1,n=1$ and
 $$m_p=\frac{\rho-n}{p}+(n-1)\min\{\frac 12,\rho\}.$$
 If $a$ belongs to the forbidden H\"{o}rmander class $S^{m_p}_{\rho,1}$ and $\phi\in \Phi^{2}$ satisfies the strong non-degeneracy condition,
 then for any $\frac {n}{n+1}<p\leq 1$, we can show that the Fourier integral operator $T_{\phi,a}$ is bounded from the local Hardy space $h^p$ to $L^p$.
 Furthermore, if $a$ has compact support in variable $x$, then we can extend this result to $0<p\leq 1$. As $S^{m_p}_{\rho,\delta}\subset S^{m_p}_{\rho,1}$ for any $0\leq \delta\leq 1$,
 our result supplements and improves upon recent theorems proved by Staubach and his collaborators for $a\in S^{m}_{\rho,\delta}$ when $\delta$ is close to 1.

  As an important special case, when $n\geq 2$, we show that $T_{\phi,a}$ is bounded from $H^1$ to $L^1$ if $a\in S^{(1-n)/2}_{1,1}$ which is a generalization of the
  well-known Seeger-Sogge-Stein theorem for $a\in S^{(1-n)/2}_{1,0}$. This result is false when $n=1$ and $a\in S^{0}_{1,1}$.
 \end{abstract}
\noindent\textbf{Keywords:} Fourier integral operator; H\"{o}rmander class; Hardy space\\
\noindent\textbf{Mathematics Subject Classification:} 42B20; 35S30

\section{Introduction and main results}

 \hspace{4mm}  A pseudo-differential operator (PDO for short) is given by
 $$T_a f(x)=(2\pi)^{-n}\int_{\mathbb{R}^n}e^{ix\cdot\xi}a(x,\xi)\widehat{f}(\xi)d\xi,$$
 where $\widehat{f}$ is the Fourier transform of $f$ and $a$ is the amplitude. We always omit the constant $(2\pi)^{-n}$ throughout this note.

 In its basic form, a Fourier integral operator (FIO for short) is defined by
 \begin{align*}
 T_{\phi,a}f(x)=\int_{\mathbb{R}^{n}}e^{i\phi(x,\xi)}a(x,\xi)\widehat{f}(\xi)d\xi,
 \end{align*}
 where $\phi$ is the phase. In this note, we always assume that $f$ belongs to the Schwartz class $S(\mathbb{R}^{n})$.

 A FIO $T_{\phi,a}$ is simply a pseudo-differential operator if $\phi(x,\xi)=x\cdot \xi$. When $\phi(x,\xi)=x\cdot \xi+|\xi|$, $T_{\phi,a}$ is closely related to the wave equation and Fourier transform on the unit sphere in $\mathbb{R}^{n}$ (see \cite[p. 395]{S93}).

 FIOs have been widely used in the theory of partial differential equations and micro-local analysis. For instance, the solution to an initial value problem for a hyperbolic equation with variable coefficients can be effectively approximated using an FIO of the initial value (see \cite[p. 425]{S93}). Therefore, the boundedness of related FIOs provides a priori estimate for the solution. A systematic study of
 these operators was initiated by H\"{o}rmander \cite{H71AM}.

 At first, we recall some simplest and most useful definitions on amplitudes and phases.

 Let $\mathbb{N}$ be the set $\{0,1,2,\ldots\}$. A function $a$ belongs to the H\"{o}rmander class $S^{m}_{\rho,\delta}$ $(m\in \mathbb{R},0\leq\rho,\delta\leq1)$ if it satisfies
 \begin{equation}\label{fiohp1.1}
 \sup_{x,\xi\in\mathbb{R}^{n}}(1+|\xi|)^{-m+\rho N-\delta M}|\nabla^{N}_{\xi}\nabla^{M}_{x}a(x,\xi)|=A_{N,M}<+\infty
 \end{equation}
 for any $N,M\in \mathbb{N}$. Immediately, one have
 $$S^{m_1}_{\rho,\delta}\subset S^{m_2}_{\rho,\delta},S^{m}_{\rho_2,\delta}\subset S^{m}_{\rho_1,\delta},S^{m}_{\rho,\delta_1}\subset S^{m}_{\rho,\delta_2},$$
 if $m_1<m_2,\rho_1<\rho_2,\delta_1<\delta_2.$

 A real-valued function $\phi$ belongs to the class $\Phi^{2}$ if $\phi$ is positively homogeneous of order 1 in the frequency variable $\xi$ and satisfies
 \begin{equation}\label{fiohp1.2}
 \sup_{(x,\xi)\in\mathbb{R}^{n}\times(\mathbb{R}^{n}\setminus\{0\})}|\xi|^{-1+N}|\nabla^{N}_{\xi}\nabla^{M}_{x}\phi(x,\xi)|=B_{N,M}<+\infty
 \end{equation}
 for all $N,M\in \mathbb{N}$ with $N+M\geq 2$.

 A real-valued function $\phi\in C^{2}(\mathbb{R}^{n}\times(\mathbb{R}^{n}\setminus\{0\}))$ satisfies the strong non-degeneracy condition (SND for short), if there exists a constant $\lambda>0$ such that
 \begin{equation}\label{fiohp1.3}
 \textrm{det}\left(\frac{\partial^{2}\phi}{\partial x_{j}\partial \xi_{k}}(x,\xi)\right)\geq \lambda
 \end{equation}
 for all $(x,\xi)\in\mathbb{R}^{n}\times(\mathbb{R}^{n}\setminus\{0\})$.

 For PDOs and FIOs, the most important problem is whether they are bounded on Lebesgue spaces and Hardy spaces.
 This problem has been extensively studied and there are numerous results. Here we always assume that $a\in S^m_{\rho,\delta}$ and $\phi\in \Phi^{2}$ satisfies the SND condition (\ref{fiohp1.3}).

 For PDOs, if $a\in S^{m}_{\rho,\delta}$ with $\delta<1$ and $m\leq \frac n2\min\{0, \rho-\delta\}$, then $T_a$ is bounded on $L^2$ and the bound on $m$ is sharp.
 See H\"{o}rmander \cite{H71CPAM}, Hounie \cite{H86} and Calder\'{o}n-Vaillancourt \cite{CV71,CV72}. For $a\in S^{m}_{\rho,1}$ Rodino \cite{R76} proved that $T_a$ is bounded on $L^2$ if $m<\frac{n(\rho-1)}{2}$ and
 constructed a amplitude $a\in S^{n(\rho-1)/2}_{\rho,1}$ such that $T_a$ is unbounded on $L^2$.  For endpoint estimates, one can see \'{A}lvarez-Hounie \cite{AH90} and Guo-Zhu \cite{GZ22}.

 For the local $L^{2}$ boundedness of FIO, it can be date back to Eskin \cite{E70} and H{\"o}rmander \cite{H71AM}. The transference of local to global regularity of FIOs can be found in
 Ruzhansky-Sugimoto \cite{RS19}. Among numerous results on the global $L^{2}$ boundedness, we would like to mention that Dos Santos Ferreira-Staubach \cite{FS14} proved the global $L^{2}$ boundedness if either $m\leq\frac{n}{2}\min\{0,\rho-\delta\}$ when $\delta<1$ or $m<\frac{n}{2}(\rho-1)$ when $\delta=1$. This bound on $m$ is also sharp. For more results, see for instance \cite{Af78,B97,F78,K76,RS06}.

 For endpoint estimates of FIOs, Seeger-Sogge-Stein \cite{SSS91} proved the local $H^{1}$-$L^{1}$ boundedness for $a\in S^{(1-n)/2}_{1,0}$ and got the $L^{p}$ boundedness for
 $a\in S^{m}_{1,0}$ when $m=(1-n)|\frac{1}{p}-\frac{1}{2}|$, by the Fefferman-Stein interpolation. Tao \cite{T04} showed the weak type (1,1) for $a\in S^{(1-n)/2}_{1,0}$.
 For the regularity of FIOs and its applications, there has been a great deal of progress and work recently, for example Cordero-Nicola-Rodino \cite{CNR09,CNR10}, Coriasco-Ruzhansky \cite{CR10, CR14},
 Hassell-Portal-Rozendaal \cite{HPR20}, Israelsson-Rodr\'{i}guez L\'{o}pez-Staubach \cite{IRS21} etc.. Among these results, the latest result is the following theorem which is proved in Castro-Israelsson-Staubach \cite{CIS21}.
 \begin{theorema}
 (\cite[Theorem 1]{CIS21}) Let $n\geq 1,0\leq \rho\leq 1, 0\leq \delta<1$,
 $$m=(\rho-n)|\frac 12-\frac 1p|+\frac{n}{2}\min\{0,\rho-\delta\}$$
 and $a\in S^{m}_{\rho,\delta}$. If $\phi\in \Phi^{2}$ satisfies the SND condition (\ref{fiohp1.3}), then the FIO $T_{\phi,a}$ is bounded on $L^p$ for $1<p<\infty$.
 \end{theorema}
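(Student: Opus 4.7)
The plan is to establish the bound by interpolation from two endpoint estimates, so the main work reduces to a Hardy-space endpoint at $p=1$.

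\emph{Step 1 (Interpolation scheme).} At $p=2$ the required order is $m_0=(n/2)\min\{0,\rho-\delta\}$, which coincides exactly with the $L^{2}$ threshold recalled from Dos Santos Ferreira--Staubach, so that endpoint is free. At $p=1$ (and by duality at $p=\infty$) the required order is $m_1=(\rho-n)/2+m_0$. I aim to prove $T_{\phi,a}\colon h^{1}\to L^{1}$ at order $m_1$ and, dually, $L^{\infty}\to \mathrm{bmo}$. Since $p\mapsto m(p)$ is linear in $1/p$ on each of the intervals $[1,2]$ and $[2,\infty]$, a Fefferman--Stein interpolation then yields the full range $1<p<\infty$ with the stated order.

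\emph{Step 2 (Dyadic and second-dyadic decomposition).} Split $a=\sum_{j\ge 0}a_{j}$ via a Littlewood--Paley partition in $\xi$, so that $a_{j}$ is supported in $|\xi|\sim 2^{j}$ and satisfies $|\nabla_{\xi}^{N}\nabla_{x}^{M}a_{j}|\lesssim 2^{j(m-\rho N+\delta M)}$, and write $T_{\phi,a}=\sum_{j}T_{j}$. On each frequency annulus, partition the unit sphere into caps $\Theta_{\nu}^{j}$ of radius $2^{-j(1-\rho)/2}$, producing $T_{j}=\sum_{\nu}T_{j,\nu}$. On each cap, derivatives tangential to the sphere gain a factor $2^{-j(1-\rho)/2}$, and the SND condition (\ref{fiohp1.3}) together with $\phi\in\Phi^{2}$ allows a change of variables in which $\phi$ is a controlled perturbation of a quadratic form in the transverse directions.

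\emph{Step 3 (Kernel estimates and the atomic argument).} Stationary phase in the tangential $\xi$ variables, combined with non-stationary integration by parts transversally, shows that the kernel $K_{j,\nu}(x,y)$ is essentially concentrated on a plate centred on $\{y=\nabla_{\xi}\phi(x,\xi_{\nu}^{j})\}$, of thickness $2^{-j}$ along the radial direction and $2^{-j(1+\rho)/2}$ transversally. For the endpoint, take a $(1,2)$-atom $b$ supported in a ball $B$ of radius $r$. If $r\ge 1$, Cauchy--Schwarz combined with the $L^{2}$ bound at order $m_{0}$ absorbs the gap $m_{1}-m_{0}=(\rho-n)/2$ through the $L^{1}(B)\subset L^{2}(B)$ loss. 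If $r<1$, use the vanishing mean of $b$ via a first-order Taylor expansion of the kernel in $y$ for the contribution from $|x-x_{B}|\lesssim 1$, and the plate geometry plus kernel decay for the tail; summing geometrically over $(j,\nu)$ gives the uniform $L^{1}$ bound on atoms.

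\emph{Principal obstacle.} The hardest point is the loss that appears when $\delta>\rho$: each $x$-derivative costs $2^{j\delta}$ rather than $2^{j\rho}$, which threatens the transverse integrations by parts used to localise the kernel. The correction $(n/2)(\rho-\delta)$ in the order $m$ is precisely what is needed to absorb this loss, but extracting it cleanly forces an additional Coifman--Meyer-type spatial resolution of $a_{j}$ at scale $2^{-j\delta}$, nested with the angular decomposition, and careful tracking of how the extra $x$-dependence interacts with the SND-based change of variables. The argument becomes progressively more fragile as $\delta\uparrow 1$ and fails outright at $\delta=1$, which is exactly why Theorem A excludes the forbidden class and motivates the improvement pursued in the present paper.
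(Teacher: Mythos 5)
Theorem A is not proved in this paper: it is quoted from \cite{CIS21}, so there is no in-paper proof to match your sketch against; the closest internal material is the machinery of Sections 2--3 that the authors build for their own Theorem 1.3. Your overall architecture --- take the $L^2$ endpoint of order $\frac n2\min\{0,\rho-\delta\}$ from \cite{FS14} for free, prove an $h^1\to L^1$ (and $L^\infty\to \mathrm{bmo}$) endpoint at order $(\rho-n)/2+\frac n2\min\{0,\rho-\delta\}$ by Littlewood--Paley plus a second dyadic decomposition and an atomic argument, then interpolate analytically --- is indeed the route of \cite{SSS91} and \cite{CIS21}, so the plan itself is the right one.

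However, two concrete gaps remain. First, the decomposition scales are wrong: caps of angular radius $2^{-j(1-\rho)/2}$ degenerate to the whole sphere when $\rho=1$, so in the flagship case $a\in S^{(1-n)/2}_{1,0}$ your scheme performs no angular localization at all and the plate/kernel estimates cannot produce the $2^{-j(n-1)/2}$ gain that the order $(\rho-n)\left|\frac12-\frac1p\right|$ requires; the correct angular scale is $2^{-j\rho_0}$ with $\rho_0=\min\{\rho,\frac12\}$ (exactly the construction preceding (\ref{fiohp2.2}) and Lemma 2.2 here), chosen so that the linearized phase error $h^{\nu}_j$ and the amplitude share the same anisotropic derivative gains, and the corresponding localization of $T^{\nu}_jb$ is at scales $2^{-j\rho}$ along $\xi^{\nu}_j$ and $2^{-j\rho_0}$ transversally, not $2^{-j}\times 2^{-j(1+\rho)/2}$. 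Second, the case $\delta>\rho$ --- which is precisely where the correction $\frac n2(\rho-\delta)$ enters and the only reason Theorem A is not already contained in \cite{SSS91} --- is named as ``the principal obstacle'' but never actually treated: no argument is carried out to absorb the $2^{j\delta}$ cost of $x$-derivatives in the integrations by parts (be it a spatial resolution of $a_j$ at scale $2^{-j\delta}$, a composition trick, or the argument of \cite{CIS21}/\cite{IMS23}), so the $h^1\to L^1$ endpoint, which is the entire substance of the proof, is not established. A further minor point: the adjoint of an FIO with amplitude in $S^m_{\rho,\delta}$ is not obviously an operator of the same type, so the claim ``dually $L^\infty\to\mathrm{bmo}$'' also needs an argument rather than a one-word appeal to duality.
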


 In \cite{SZ23}, when $2<p<\infty$ and $0<\rho\leq 1$, by using a new endpoint estimate, Shen-Zhu improved Theorem A to the following theorem which may be optimal.
 \begin{theoremb}\cite[Theorem 1.2]{SZ23}
 Let $n\geq 1,0<\rho\leq 1, 0\leq \delta<1$,
 $$m=(\rho-n)|\frac 12-\frac 1p|+\frac{n}{p}\min\{0,\rho-\delta\}$$
 and $a\in S^{m}_{\rho,\delta}$. If $\phi\in \Phi^{2}$ satisfies the SND condition (\ref{fiohp1.3}), then $T_{\phi,a}$ is bounded on $L^p$ for $2<p<\infty$.
 \end{theoremb}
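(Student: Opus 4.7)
The plan is to obtain $L^p$ boundedness for $2<p<\infty$ by interpolating between the sharp $L^2$ bound and a new endpoint estimate at $p=\infty$. The key observation is that the order
$$m(p) = (\rho-n)\left(\frac{1}{2}-\frac{1}{p}\right) + \frac{n}{p}\min\{0,\rho-\delta\}$$
is affine in $1/p$ and satisfies $m(2)=\frac{n}{2}\min\{0,\rho-\delta\}$, which is the sharp Dos Santos Ferreira-Staubach $L^2$ order from \cite{FS14}, as well as $m(\infty)=\frac{\rho-n}{2}$. Thus $m(p)$ lies exactly on the complex interpolation line between an $L^2\to L^2$ endpoint and an $L^\infty\to\mathrm{BMO}$ endpoint, and it is this placement that both determines the strategy and suggests that the result is optimal.

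First I would take the sharp $L^2$ estimate of \cite{FS14} as a black box. Second, the main new ingredient is to prove $T_{\phi,a}:L^\infty\to\mathrm{BMO}$ for $a\in S^{(\rho-n)/2}_{\rho,\delta}$. I would follow a Seeger-Sogge-Stein-style strategy: a Littlewood-Paley splitting $a=\sum_j a_j$ with $a_j$ supported in $|\xi|\sim 2^j$, and then a second dyadic angular decomposition of each $a_j$ adapted to the parameter $\rho$ (caps of opening $\sim 2^{-j\rho/2}$). For each angular piece, I would change coordinates using the SND condition to straighten the phase, derive the standard FIO size and oscillation bounds on the resulting kernel, and verify BMO by controlling the mean oscillation of $T_{\phi,a}f$ on balls of every radius -- small balls by direct pointwise kernel estimates, large balls by reducing to the $L^2$ endpoint via John-Nirenberg. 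Third, I apply Stein's complex interpolation to the analytic family $\{T_{\phi,a_z}\}$ with $a_z(x,\xi):=a(x,\xi)(1+|\xi|^2)^{z/2}$, placed so that the two boundary lines of the strip correspond to the $L^2$ and the $L^\infty\to\mathrm{BMO}$ endpoints; at the interior point $z=0$ one reads off the $L^p$ bound with the claimed order.

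The main obstacle is the endpoint $L^\infty\to\mathrm{BMO}$ bound itself. When $\delta$ is close to $1$, the $x$-derivatives of $a$ grow nearly like $|\xi|$, and a crude integration by parts in $x$ yields only a factor $2^{-j(1-\delta)}$ per derivative, which is insufficient to close the off-diagonal kernel estimates. One has to integrate by parts in directions tangent to the canonical relation associated with $\phi$ (the SND condition is essential here), and, at the reassembly step of the angular pieces, to invoke a Cotlar-Stein almost-orthogonality argument that exploits the $\frac{n}{2}(\rho-\delta)$ of smoothness built into the $L^2$ endpoint. A secondary subtlety is that the formal adjoint $T^{*}_{\phi,a}$ need not lie in $S^{m}_{\rho,\delta}$ for the same symbol class, so the endpoint must be proved directly rather than obtained by dualizing an $H^1\to L^1$ bound for $T_{\phi,a}$ itself.
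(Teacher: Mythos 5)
You should first note that the paper you were asked to match contains no proof of Theorem B: it is quoted from \cite[Theorem 1.2]{SZ23}, and the introduction only records that Shen--Zhu obtained it ``by using a new endpoint estimate'' on top of the sharp $L^2$ theory. Your architecture --- the sharp $L^2$ bound of \cite{FS14} at $p=2$, a new $L^\infty\to\mathrm{BMO}$ estimate at order $\frac{\rho-n}{2}$, and Stein/Fefferman--Stein complex interpolation with the analytic family $a_z=a(x,\xi)(1+|\xi|^2)^{(\cdot)z/2}$ --- is exactly that route, and your bookkeeping is correct: $m(2)=\frac n2\min\{0,\rho-\delta\}$, $m(\infty)=\frac{\rho-n}{2}$, and $m(p)$ is affine in $1/p$, so the interpolation step is standard.

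The gap is that the whole new content of Theorem B is the $p=\infty$ endpoint, and you only sketch it --- and your diagnosis of where the difficulty lies points in the wrong direction. In an $L^\infty\to\mathrm{BMO}$ argument one never integrates by parts in $x$; the spatial dependence of the amplitude enters only through differences $K(x,y)-K(x_0,y)$ over a ball, where the phase already contributes a factor of size $2^{j}|x-x_0|$, so $\delta$ close to $1$ costs nothing extra. In fact no $x$-regularity is needed at all, which is precisely why the $\delta$-term in $m(p)$ vanishes as $p\to\infty$ and why this circle of papers (including the present one) works in the rough class $L^{\infty}S^{(\rho-n)/2}_{\rho}$; consequently the proposed remedies (integration by parts tangent to the canonical relation, a Cotlar--Stein reassembly ``exploiting $\frac n2(\rho-\delta)$'') are not the needed ingredients. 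What actually has to be done, and is absent from your sketch, is the $\rho$-adapted second dyadic decomposition with caps of opening $2^{-j\rho_{0}}$, $\rho_{0}=\min\{\rho,\frac12\}$ (your choice $2^{-j\rho/2}$ is wider when $\rho<1$, weakens the transverse integration by parts, and it is not clear the numerology at order $\frac{\rho-n}{2}$ then closes), together with the kernel and kernel-difference estimates and the case analysis in the ball radius: the near part of $f$ is handled by the $L^2$ bound and Cauchy--Schwarz, the far part by kernel oscillation, and large radii by measure estimates of an influence set (compare $\widetilde B_r$ and Section 3.2.1 here), rather than ``small balls by kernel estimates, large balls by John--Nirenberg.'' As it stands, your proposal is a correct plan in which the decisive lemma is asserted rather than proved.
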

 Theorem B remain true when $\rho=0$ because in this case $T_{\phi,a}$ can be considered as the sum of the low frequency part and a PDO. This fact has been also used in Section 3.1.

 Before the next theorem, we recall the definitions of Hardy spaces and local Hardy spaces. Let $\Phi$ be a function in the Schwartz space $S(\mathbb{R}^n)$ satisfying $\int_{\mathbb{R}^n}\Phi(x)dx=1$.
 Set $\Phi_t(x)=\frac{1}{t^n}\Phi(\frac{x}{t})$. Following Stein \cite[p. 91]{S93}, we can define the Hardy space $H^p(\mathbb{R}^n)(0<p<+\infty)$ as the space of all tempered distributions $f$ satisfying
 $$\|f\|_{H^p}=\|\sup\limits_{t>0}|f\ast\Phi_t(x)|\|_{L^p(\mathbb{R}^n)}<\infty.$$
 The local Hardy space (see \cite {G79}) $h^p(\mathbb{R}^n)(0<p<+\infty)$ is defined as the space of all tempered distributions $f$ satisfying
 $$\|f\|_{h^p}=\|\sup\limits_{0<t<1}|f\ast\Phi_t(x)|\|_{L^p(\mathbb{R}^n)}<\infty.$$
 It is well known that $H^p=h^p=L^{p}$ for equivalent norms when $1<p<+\infty$ and $H^p\subset h^p\subset L^p$ when $0<p\leq 1$.

 In \cite{IMS23}, Israelsson-Mattsson-Staubach extend Theorem A to $p\leq 1$ implicity.
 \begin{theoremc}
 (\cite{IMS23}) Let $n\geq 1,0\leq \rho\leq 1, 0\leq \delta<1$,
 $$m=(\rho-n)|\frac 12-\frac 1p|+\frac{n}{2}\min\{0,\rho-\delta\}$$
 and $a\in S^{m}_{\rho,\delta}$. If $\phi\in \Phi^{2}$ satisfies the SND condition (\ref{fiohp1.3}), then $T_{\phi,a}$ is bounded from the local Hardy space $h^p$ to $L^p$ when  $\frac {n}{n+1}<p\leq 1$.
 Furthermore, if $a$ has compact support in the spatial variable $x$, then $T_{\phi,a}$ is bounded from $h^p$ to $L^p$ for any $0<p\leq 1$.
 \end{theoremc}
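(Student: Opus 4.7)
The plan is to combine the atomic decomposition of $h^p$ with a Seeger--Sogge--Stein-type analysis of $T_{\phi,a}$ and the $L^2$ boundedness supplied by Theorem~A. Every $f\in h^p$, $0<p\le 1$, admits a decomposition $f=\sum_k\lambda_k b_k$ with $\sum_k|\lambda_k|^p\lesssim\|f\|_{h^p}^p$, where each $b_k$ is an $h^p$-atom: supported in a ball $B(x_k,r_k)$ with $r_k\le 1$, satisfying $\|b_k\|_{\infty}\le|B(x_k,r_k)|^{-1/p}$, and, when $r_k<1$, having vanishing moments up to order $N_p:=\lfloor n(1/p-1)\rfloor$. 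In the range $\frac{n}{n+1}<p\le 1$ one has $N_p=0$, so only $\int b_k=0$ is available. By $p$-subadditivity of $\|\cdot\|_{L^p}^p$ it suffices to prove $\|T_{\phi,a}b\|_{L^p}\le C$ uniformly over such atoms $b$.

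Fix an atom $b$ supported in $B(x_0,r)$ with $r\le 1$ and write $T_{\phi,a}=T^{\mathrm{low}}+\sum_{j\ge 1}T_j$ via a Littlewood--Paley partition of unity in $\xi$, with $T_j$ localized to $|\xi|\sim 2^j$. The low-frequency piece has a smooth, rapidly decaying kernel and is handled directly. For each $T_j$ perform the Seeger--Sogge--Stein second dyadic decomposition, partitioning the unit sphere into $\sim 2^{(n-1)j\rho/2}$ caps of diameter $2^{-j\rho/2}$ (in the case $n\ge 2$) to write $T_j=\sum_\nu T_{j,\nu}$; non-stationary phase in the tangential directions then gives the standard plate estimate for the kernel $K_{j,\nu}(x,y)$, concentrated on a curved tube about the characteristic variety $y=\nabla_\xi\phi(x,\xi_\nu^j)$ with rapid decay off this tube. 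Split $\mathbb{R}^n=E\cup E^c$, where $E$ is a thickened neighborhood of the image of $B(x_0,r)$ under the canonical map $x\mapsto\nabla_\xi\phi(x,\xi)$. On $E$ the $L^2$ bound of Theorem~A combined with H\"older's inequality yields $\|T_{\phi,a}b\|_{L^p(E)}\lesssim 1$. On $E^c$ invoke $\int b=0$ to rewrite $T_{\phi,a}b(x)=\int[K(x,y)-K(x,x_0)]b(y)\,dy$; the plate structure supplies a first-order $y$-gradient bound which, summed over $j$ and $\nu$ against the diameter $r$ coming from the moment condition, converges exactly at the critical exponent $m=m_p$.

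The principal difficulty is the far-region estimate on $E^c$: only a single vanishing moment of $b$ is available in the range $\frac{n}{n+1}<p\le 1$, and one must delicately balance the SSS plate geometry against the dyadic amplification $2^{jm}$ coming from $a\in S^m_{\rho,\delta}$ so that the geometric series in $j$ converges. This is precisely where the threshold $p>\frac{n}{n+1}$ appears; higher-order moments ($N_p\ge 1$) would contribute extra powers of $r$ but are not available in this range. When $a$ has compact support in $x$, the output $T_{\phi,a}b$ is automatically confined to a fixed compact set, so $E^c$ has bounded measure and the additional cancellation carried by $h^p$-atoms of small $p$ (with $N_p\ge 1$) allows the argument to be pushed through for every $0<p\le 1$, removing the restriction.
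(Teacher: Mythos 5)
Theorem C is not proved in this paper at all: it is quoted from \cite{IMS23} and justified only by the remark that it follows from Propositions 5.7 and 6.4 of \cite{IRS21} and Proposition 5.1 of \cite{IMS23} together with $h^p=F^0_{p,2}\subset L^p$. So your atomic/Seeger--Sogge--Stein outline is necessarily a different route; in fact it is essentially the route the paper takes for its \emph{own} Theorem 1.3, and that is where the central gap lies. Kernel/plate estimates of the kind you describe use only the bounds $|\nabla^N_\xi a(x,\xi)|\lesssim(1+|\xi|)^{m-\rho N}$, i.e.\ only the rough-class information $a\in L^\infty S^m_\rho$, and they are critical exactly at the order $m_p=\frac{\rho-n}{p}+(n-1)\min\{\rho,\frac12\}$ of Theorem 1.3 (your own phrase ``converges exactly at the critical exponent $m=m_p$'' conflates the two thresholds). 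But Theorem C's order $m=(\rho-n)|\frac12-\frac1p|+\frac n2\min\{0,\rho-\delta\}$ is \emph{strictly larger} than $m_p$ whenever $\rho<1$ and $\delta\le\rho$ (at $p=1$ the gap is $\frac{1-\rho}{2}$ for $\frac12\le\rho<1$ and at least $\frac14$ for $\rho\le\frac12$), and for $\rho\le\frac12$ it even exceeds the threshold that is sharp for the rough class (Remark 2). Hence no argument in which the off-exceptional-set kernel bounds, the single-moment cancellation and the low-scale estimates use only $\xi$-derivatives of $a$ can close at Theorem C's order; the $x$-regularity $\delta<1$ must enter beyond one application of Theorem A's $L^2$ bound. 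The degenerate case makes this concrete: for $\rho=\delta=0$, $p=1$, Theorem C asserts boundedness at order $-\frac n2$, while the rough class requires order $-n$ (Lemma 2.1 and the counterexamples cited in Remark 2); moreover at $\rho=0$ there is no second dyadic decomposition and $\xi$-integration by parts produces no negative power of $2^j$, so your far-region sums over $2^jr\ge1$ diverge outright. The paper handles $\rho=0$ for its own theorem by the change of variables $x\mapsto\nabla_\xi\phi(x,\xi_0)$ reducing to a pseudodifferential estimate (Section 3.1), and \cite{IMS23} reach Theorem C through Triebel--Lizorkin machinery, not through plate estimates.

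Even at the order $m_p$ there is a quantitative error in your exceptional-set step. For the complement estimates to have a chance, $E$ must contain the union over \emph{all} directions $\xi\in S^{n-1}$ of the thickened images of $B(x_0,r)$, and such a set has measure comparable to $r$, not $r^n$ (cf.\ $P_r$ and \eqref{fiohp2.4}; think of a half-wave propagator applied to an atom, whose output lives on an $r$-neighborhood of a sphere). Then H\"older plus the \emph{global} $L^2$ bound of Theorem A only gives $\|T_{\phi,a}b\|_{L^p(E)}\lesssim|E|^{\frac1p-\frac12}\|b\|_2\lesssim r^{(1-n)(\frac1p-\frac12)}$, which blows up as $r\to0$ for $n\ge2$. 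What is needed instead is the frequency-localized $L^2$ bound for $T_{\phi,a_j}$ (as in Lemma 2.5), whose geometric decay in $j$ is summed only over the high scales $2^{j\rho}r\gtrsim1$, while the scales $2^{j\rho}r\lesssim1$ are treated by the moment/Taylor argument or, at $p=1$, by the inequality \eqref{fiohp3.5}; your sketch contains no such split of scales relative to $r$. Smaller points: the cap width should be $2^{-j\min\{\rho,1/2\}}$ rather than $2^{-j\rho/2}$; atoms with $r\ge1$ carry no moments and need the separate argument of Sections 3.2.1 and 4.1; and the extension to all $0<p\le1$ under compact $x$-support is asserted rather than proved---the obstruction at $p\le\frac{n}{n+1}$ comes from the limited decay of the low-frequency kernel (the phase is merely homogeneous of degree one near $\xi=0$), not from the measure of $E^c$, and removing it requires the higher moments together with a genuinely finer analysis of the type carried out in Section 4 (Taylor expansion of $\widehat b$, fractional integration, and an $H^1\to L^1$ input).
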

 Theorem C can be considered as a corollary of \cite[Proposition 5.7]{IRS21}, \cite[Proposition 6.4]{IRS21} and \cite[Proposition 5.1]{IMS23} due to the facts $h^p=F^0_{p,2}$ and $h^p\subset L^p$.

 Set $\lambda_{\rho}=1$ when $0\leq \rho\leq \frac 12$ and $\lambda_{\rho}=\frac{1}{2\rho}$ when $\frac 12<\rho\leq 1$, $\rho_0=\min\{\rho,\frac 12\}$ and
 \begin{equation}\label{fiohp1.4}
 m_{p}=\frac{\rho-n}{p}+(n-1)\min\{\rho,\frac 12\}=\frac{\rho-n}{p}+(n-1)\rho_0.
 \end{equation}
 It is easy to see that
 $$\frac 12\leq \lambda_{\rho}\leq 1,\rho_0=\lambda_{\rho}\rho.$$

 Now we state our main result in this note.
 \begin{theorem}
  Suppose that $0\leq\rho\leq 1,n\geq 2$ or $0\leq\rho<1,n=1$, $a\in S^{m_p}_{\rho,1}$ where $m_p$ is given by (\ref{fiohp1.4}) and $\phi\in \Phi^{2}$ satisfies the SND condition (\ref{fiohp1.3}). Then for any
  $\frac {n}{n+1}<p\leq 1$, we have
 $$\|T_{\phi,a}f\|_{L^p}\leq C\|f\|_{h^p}.$$
 Furthermore, if $a$ has compact support in variable $x$, then for any $0<p\leq 1$ we have
  $$\|T_{\phi,a}f\|_{L^p}\leq C\|f\|_{h^p}.$$
 The constants here depend only on $n,\rho,\lambda,p$ and finitely many semi-norms $A_{N,M}, B_{N,M}$.
 \end{theorem}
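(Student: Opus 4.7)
The plan is to reduce the forbidden-class hypothesis $\delta=1$ to the non-forbidden range already covered by Theorem C via a paraproduct decomposition of the amplitude in the $x$-variable, and to dispose of the resulting high $x$-frequency remainder by a direct kernel analysis combined with the atomic decomposition of $h^p$.

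First I fix a Littlewood--Paley partition of unity $\{\psi_j\}_{j\geq 0}$ in the $x$-frequency variable and let $\Delta_j^x$ denote the corresponding spatial frequency projection, so that $a=\sum_{j\geq 0}\Delta_j^x a$. Setting $\rho'=\rho$ when $\rho<1$ and $\rho'=1-\varepsilon$ for a small $\varepsilon>0$ when $\rho=1$, I split $a=a_1+a_2$ with
$$a_1(x,\xi)=\sum_{j:\,2^j\leq c(1+|\xi|)^{\rho'}}\Delta_j^x a(x,\xi).$$
A Bernstein-type argument in $x$ yields $|\partial_\xi^{\alpha}\partial_x^{\beta}a_1(x,\xi)|\leq C_{\alpha,\beta}(1+|\xi|)^{m_p-\rho|\alpha|+\rho'|\beta|}$, so $a_1\in S^{m_p}_{\rho,\rho'}$. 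A short case analysis splitting on $\rho\leq\tfrac12$ versus $\rho>\tfrac12$ verifies that $m_p=\tfrac{\rho-n}{p}+(n-1)\rho_0$ is admissible for Theorem C with $\delta=\rho'<1$; hence $T_{\phi,a_1}$ is bounded from $h^p$ to $L^p$ in the stated range.

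For the remainder $a_2$, whose $x$-frequency is concentrated in the regime $|\eta|\geq c(1+|\xi|)^{\rho'}$, I would exploit this high-frequency spatial localization together with the oscillation of $e^{i\phi}$: in the range of $|\xi|$ relevant to $a_2$, the SND gradient $\nabla_\xi\phi$ still dominates any $x$-frequency perturbation $\eta$, so non-stationary-phase integration by parts in $\xi$ produces kernel estimates off the characteristic set with arbitrarily fast decay. Combined with the forbidden-class $L^2$ estimate of Dos Santos Ferreira--Staubach and the atomic decomposition of $h^p$, for each $h^p$-atom $\mathfrak{a}$ supported in a ball $B(x_0,r)$ the quantity $\|T_{\phi,a_2}\mathfrak{a}\|_{L^p}$ can be bounded by splitting into a local region (where the $L^2$ bound and H\"older suffice) and a far-away region (where the kernel decay together with the vanishing moments of $\mathfrak{a}$ gives the bound); summing over atoms yields the $h^p\to L^p$ estimate.

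The main obstacle will be the analysis of $T_{\phi,a_2}$: because $\delta=1$, the amplitude $a_2$ lies in no H\"ormander class $S^m_{\rho,\delta'}$ with $\delta'<1$, so no symbolic calculus applies and one must exploit both the $x$-Fourier support restriction on $a_2$ and the SND oscillation of $e^{i\phi}$ simultaneously; the summation in $j$ has to be arranged with care to keep the resulting kernel bound summable, especially at $\rho=1$ where the threshold $\rho'=1-\varepsilon$ gives only a marginal gain. The extension from $\tfrac{n}{n+1}<p\leq 1$ to the full range $0<p\leq 1$ in the compactly $x$-supported case then follows by using atomic representations with arbitrarily many vanishing moments and exploiting the compactness in $x$ to globalize the kernel estimates, removing the restriction that forces $p>\tfrac{n}{n+1}$ in the non-compact case.
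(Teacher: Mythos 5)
Your reduction of $a_1$ to Theorem C is essentially sound (modulo replacing the sharp $\xi$-dependent truncation $2^j\le c(1+|\xi|)^{\rho'}$ by the standard symbol smoothing adapted to a $\xi$-Littlewood--Paley decomposition, so that $a_1$ really lies in $S^{m_p}_{\rho,\rho'}$), and your order check is correct: $m_p\le(\rho-n)(\tfrac1p-\tfrac12)$ when $\delta\le\rho$, with equality at $\rho=1$. The genuine gap is the remainder $a_2$, which is where all the difficulty of the forbidden class sits, and your sketch for it rests on a misconception. First, $a_2$ is not small: for $\delta=1$ every dyadic $x$-frequency block $|\eta|\sim 2^{j}$ with $2^{k\rho'}\lesssim 2^{j}\lesssim 2^{k}$ contributes at full strength $\sim 2^{km_p}$ on the shell $|\xi|\sim 2^k$, so $a_2$ has the same size and the same $\xi$-derivative bounds as $a$ itself; nothing about it is "residual". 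Second, the proposed mechanism is invalid: non-stationary-phase integration by parts in $\xi$ only sees $\nabla_\xi$ of $e^{ih^\nu_j}a_2\psi^\nu_j$ and produces decay in $|\nabla_\xi\phi(x,\xi^\nu_j)-y|$; the $x$-frequency $\eta$ of the amplitude never enters a $\xi$-integration by parts, and the SND condition controls the map $x\mapsto\nabla_\xi\phi(x,\xi)$ (lower bounds for $\nabla_\xi\phi(x,\xi)-\nabla_\xi\phi(y,\xi)$), not any "domination of $\eta$ by $\nabla_\xi\phi$". So the claimed "arbitrarily fast decay" gain attributable to the high $x$-frequency localization does not exist, and with it goes the summability in $j$ that you yourself flag as the delicate point.

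Even granting the Dos Santos Ferreira--Staubach $L^2$ bound (which does apply, since $m_p<\tfrac n2(\rho-1)$ in the admissible range), a "local region by Cauchy--Schwarz, far region by kernel decay plus vanishing moments" argument does not close at the critical order: at $\rho=1$, $p=1$, $n\ge2$ one must reach exactly $m_1=(1-n)/2$ with no slack, and this is only achievable through the Seeger--Sogge--Stein second dyadic (angular) decomposition together with a correctly chosen exceptional set; a Calder\'on--Zygmund-type pairing of moments against kernel decay is not available for an FIO kernel. This is precisely what the paper supplies: it never differentiates the amplitude in $x$ at all, proving the stronger Theorem 1.3 for $a\in L^\infty S^{m_p}_{\rho}\supset S^{m_p}_{\rho,1}$ by combining the angular decomposition, the dyadic-piece $L^2$ estimate of Lemma 2.5, the new exceptional set $P_r$ with $|P_r|\lesssim r$ and the pointwise estimate (\ref{fiohp2.5}), and, for small atoms at $p=1$, direct bounds on $\widehat b$ including (\ref{fiohp3.5}). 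In other words, your $a_2$ is exactly the general case the paper treats head-on, so the paraproduct splitting defers rather than solves the problem; to complete your route you would need, for $T_{\phi,a_2}$, essentially the full machinery above (or a genuinely new argument exploiting the output-frequency structure of the high $x$-frequency piece, which you have not provided).
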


 As the most important special case, we generalize the well-known Seeger-Sogge-Stein theorem to the forbidden H\"{o}rmander class $S^{(1-n)/2}_{1,1}$ when $n\geq 2$.
 \begin{corollary}
 If $n\geq 2$, $a\in S^{(1-n)/2}_{1,1}$ and $\phi\in \Phi^{2}$ satisfies the SND condition (\ref{fiohp1.3}), then we have
 $$\|T_{\phi,a}f\|_{L^1}\leq C\|f\|_{h^1}.$$
 \end{corollary}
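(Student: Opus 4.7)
The plan is to deduce the corollary directly from Theorem 1.1 by specialising to $p=1$ and $\rho=1$. First I would verify the parameter match: the hypothesis $n\geq 2$ places us in the first branch ``$0\leq \rho\leq 1, n\geq 2$'' of Theorem 1.1, so the choice $\rho=1$ is admissible. Substituting $p=1$ and $\rho=1$ into the formula (\ref{fiohp1.4}) for $m_p$ yields
$$m_1=\frac{1-n}{1}+(n-1)\min\bigl\{1,\tfrac{1}{2}\bigr\}=(1-n)+\frac{n-1}{2}=\frac{1-n}{2},$$
so the forbidden symbol class $S^{(1-n)/2}_{1,1}$ of the corollary is exactly the class $S^{m_1}_{\rho,1}$ appearing in Theorem 1.1. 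Moreover $p=1$ satisfies $\frac{n}{n+1}<1\leq 1$ for every $n\geq 1$, so $p=1$ lies in the admissible range of the first conclusion of Theorem 1.1. Applying that conclusion immediately gives $\|T_{\phi,a}f\|_{L^1}\leq C\|f\|_{h^1}$, which is the corollary.

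Since the whole argument is a one-step invocation of the Main Theorem, there is no technical obstacle at this stage: the real work, including the treatment of the forbidden endpoint $\delta=1$ and of $p$ down to $\frac{n}{n+1}$, has been carried out in Theorem 1.1. The conceptual content worth emphasising in the write-up is therefore twofold. First, compared with the Seeger--Sogge--Stein theorem for $a\in S^{(1-n)/2}_{1,0}$, one gains the entire forbidden range up to $\delta=1$ without weakening either the SND hypothesis on $\phi$ or the endpoint $p=1$. Second, the restriction $n\geq 2$ is essential: the Main Theorem explicitly excludes $\rho=1$ in dimension one, in line with the counterexample for $S^0_{1,1}$ mentioned in the abstract, so this sanity check should be recorded rather than proved.
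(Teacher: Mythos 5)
Your proposal is correct and matches the paper, which presents this corollary precisely as the specialisation of Theorem 1.1 to $p=1$, $\rho=1$ (where $m_1=\frac{1-n}{1}+\frac{n-1}{2}=\frac{1-n}{2}$), with $n\geq 2$ required exactly because the case $n=1$, $\rho=1$ is excluded. No gap; the computation of $m_1$ and the check that $p=1$ lies in the admissible range $\frac{n}{n+1}<p\leq 1$ are all that is needed.
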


  In fact, we establish a more comprehensive theorem in this paper, which generalizes Theorem 1.1. Prior to presenting our theorem, we introduce the rough H\"{o}rmander class $L^{\infty}S^{m}_{\rho}$,
  which is defined by Kenig-Staubach \cite{KS07}. Let $m\in \mathbb{R}$ and $0\leq\rho\leq 1$. A function $a$ that is smooth in the frequency variable $\xi$ and bounded measurable in the spatial variable $x$
  belongs to the rough H{\"o}rmander class $L^{\infty}S^{m}_{\rho}$,  if it satisfies
 \begin{equation}\label{fiohp1.5}
 \sup_{\xi\in \mathbb{R}^n}(1+|\xi|)^{-m+\rho N}\left\|\nabla^N_{\xi}a(\cdot,\xi)\right\|_{L^{\infty}( \mathbb{R}^n)}=A_{N}<\infty
 \end{equation}
 for all $N\in \mathbb{N}$. It is easy to see that $S^m_{\rho,1}\subset L^{\infty}S^{m}_{\rho}$.

 In this note, we prove the following theorem.
 \begin{theorem}
 Suppose that $0\leq\rho\leq 1,n\geq 2$ or $0\leq\rho<1,n=1$, $a\in L^{\infty}S^{m_p}_{\rho}$ where $m_p$ is given by (\ref{fiohp1.4}) and $\phi\in \Phi^{2}$ satisfies the SND condition (\ref{fiohp1.3}).
 Then for any $\frac {n}{n+1}<p\leq 1$, we have
 $$\|T_{\phi,a}f\|_{L^p}\leq C\|f\|_{h^p}.$$
 Furthermore, if $a$ has compact support in variable $x$, then for any $0<p\leq 1$ we have
  $$\|T_{\phi,a}f\|_{L^p}\leq C\|f\|_{h^p}.$$
 The constants here depend only on $n,\rho,\lambda,p$ and finitely many semi-norms $A_{N}, B_{N,M}$.
 \end{theorem}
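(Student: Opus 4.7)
The plan is to implement an atomic-decomposition scheme of Seeger--Sogge--Stein type, arranged so that every integration-by-parts step is performed in the frequency variable $\xi$ (where $a$ is smooth) rather than in $x$ (where $a$ is only bounded).

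By linearity and the atomic decomposition of $h^p$, it suffices to prove the uniform bound $\|T_{\phi,a}b\|_{L^p} \leq C$ for every $h^p$-atom $b$. A local atom is supported in a ball $B(x_0,r)$ with $\|b\|_{L^\infty} \leq r^{-n/p}$ and, when $r \leq 1$, satisfies vanishing moments up to order $\lfloor n(1/p-1)\rfloor$. For $n/(n+1) < p \leq 1$ this specializes to the mean-zero case $r \leq 1$, which is the only case to treat in the first half of the theorem. The extra large atoms needed for $p \leq n/(n+1)$ will be handled under the compact-$x$-support hypothesis: there $T_{\phi,a}b$ is supported in a fixed compact set, so a global $L^2$ bound on $T_{\phi,a}$ (which holds for amplitudes in $L^\infty S^{m_p}_\rho$ via a standard smoothing-in-$x$ trick on each frequency-dyadic piece, feeding the smoothed symbols into the known smooth $L^2$ theory) upgrades to the desired $L^p$ bound by H\"older's inequality.

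For a small atom $b$ on $B(x_0,r)$ with $r \leq 1$ and $\int b = 0$, I perform a Littlewood--Paley decomposition $a(x,\xi) = \sum_{j \geq 0} a_j(x,\xi)$ with $a_j$ supported in $|\xi| \sim 2^j$. The $j=0$ piece has a smooth bounded kernel and is trivial. For $j \geq 1$ I apply a secondary angular decomposition of the sphere $\{|\xi|=2^j\}$ into caps of aperture $\sim 2^{-j\rho_0}$, yielding $a_j = \sum_\nu a_{j,\nu}$; let $\xi^{j,\nu}$ denote a cap center and $x^{j,\nu} := \nabla_\xi\phi(x_0,\xi^{j,\nu})$. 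A Seeger--Sogge--Stein tube $R^{j,\nu}$ centered at $x^{j,\nu}$, with transverse scale controlled by $2^{-j\rho_0}$, separates physical space into a near and a far region. On the near region I use Cauchy--Schwarz combined with a $TT^*$/Plancherel $L^2$ bound for $T_{\phi,a_{j,\nu}}$, which needs only the $L^\infty_x$ size and the $\xi$-smoothness of the amplitude; the measure factor $|R^{j,\nu}|^{1/p-1/2}$ then matches exactly the arithmetic behind (\ref{fiohp1.4}) after summation in $\nu$ and $j$.

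The main obstacle lies on the far region $(R^{j,\nu})^c$. In the classical smooth Seeger--Sogge--Stein argument one integrates by parts in $x$ using derivatives of the amplitude, which is forbidden here since $a$ has only $L^\infty$ control in the spatial variable. I substitute integration by parts in $\xi$: the SND condition yields $|\nabla_\xi(\phi(x,\xi) - x\cdot\xi^{j,\nu})| \gtrsim |x - x^{j,\nu}|$ on the far region, and the vanishing moment of $b$ allows one to replace $\widehat b(\xi)$ by its Taylor remainder about $\xi^{j,\nu}$, supplying a cancellation gain of order $r \cdot 2^{-j\rho_0}$ per round and transferring the remaining derivatives onto the symbol and the phase in $\xi$ only. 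Iterating this procedure produces decay of the form $(1+2^{j\rho_0}|x-x^{j,\nu}|)^{-N}$ for arbitrary $N$, whose $L^p$ mass summed over $(j,\nu)$ is finite. Combining the near and far estimates, together with the reduction from the first paragraph, completes the proof of Theorem 1.3.
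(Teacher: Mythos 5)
Your plan is the classical Seeger--Sogge--Stein scheme (caps, tubes, $L^2$ on a near region, moments plus integration by parts in $\xi$ on the far region), and while the general territory is right, two genuine gaps remain. A preliminary one: the reduction "for $\frac{n}{n+1}<p\le 1$ only mean-zero atoms with $r\le 1$ occur" is false. Atoms supported on balls of radius $r\ge 1$ with no cancellation belong to $h^p$ for every $0<p\le 1$, and in the first half of the theorem $a$ has no compact $x$-support, so your H\"older/compact-support trick is unavailable for them; the threshold $\frac{n}{n+1}$ has nothing to do with excluding large atoms (in the paper it enters only through the low-frequency piece $T_{\phi,a_0}$, via Theorem C). This is fixable (the paper treats $r\ge 1$ with the set $\widetilde B_r$ of measure $\lesssim r^n$ and non-stationary phase in $\xi$), but as written it is a hole.

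The more serious gap is in the heart of the small-atom case when $\rho<1$. Your far-region argument relies on the vanishing moment of $b$, but the available gain is $\min\{1,2^jr\}$ in the radial direction (indeed $|\nabla\widehat b|\lesssim r$ while the cap has radial extent $\sim 2^j$; your claimed gain $r\,2^{-j\rho_0}$ per round is not a correct bookkeeping), so for frequencies in the range $r^{-1}\lesssim 2^j\lesssim r^{-1/\rho}$ the cancellation gives nothing, while the near-region Cauchy--Schwarz/$L^2$ bound with $\|b\|_2\lesssim r^{-n/2}$ only closes, at the level $m_p$ of (\ref{fiohp1.4}), for $2^{j\rho}r\gtrsim 1$. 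This intermediate range is exactly the "critical difficulty" the paper flags, and its proof does not run your scheme: for $2^{j\rho}r<1$ it estimates directly through the Fourier transform of the atom using $\int|\widehat b(\xi)||\xi|^{-n}d\xi\lesssim\|b\|_{H^1}$ (\ref{fiohp3.5}), and for $2^{j\rho}r\ge 1$ it replaces the SSS tubes by the new exceptional set $P_r$ (defined by $|\phi(x,\xi)|\le 3r$ and $|\nabla_\xi\phi(x,\xi)|\le 3r^{\lambda_\rho}$, with $|P_r|\lesssim r$ and the key inequality (\ref{fiohp2.5})). Neither ingredient, nor any substitute, appears in your proposal, and no computation is offered showing your summation over $(j,\nu)$ converges in that range. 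Relatedly, for $p<1$ the assertion that the measure factor $|R^{j,\nu}|^{1/p-1/2}$ "matches exactly the arithmetic" is unsubstantiated; the paper found that plain Cauchy--Schwarz does not suffice there and instead pushes the exceptional-set piece through fractional integration $I_s$, Krantz's $H^t\to H^1$ bound, and the already-established $h^1\to L^1$ case, cf. (\ref{fiohp4.8}). Without these steps the central cases $\rho<1$, $r<1$ (and the $p<1$ case generally) are not proved.
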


 \textbf{Remark 1.} We restrict $n,\rho$ just to ensure the $L^2$ boundedness of $T_{\phi,a}$. When $n=\rho=1$, according to \cite[Theorem 1.2]{GZ22}, we know that Theorem 1.1 is false as
 there exists a amplitude $a\in S^{0}_{1,1}$ such that the pseudo-differential operator $T_a$ is unbounded from $H^1$ to $L^1$.

 \textbf{Remark 2.} When $0\leq\rho\leq \frac 12$, the FIO behaves like a PDO and the bound on $m$ in Theorem 1.1 is sharp when $p=1$. When $\rho=1$, the bound on $m$ in Theorem 1.1 is also sharp when $p=1$.
  When $0\leq\rho\leq \frac 12$, one can find a counterexample in \cite{KS07} or \cite{GZ22} for PDOs. When $\rho=1$, one can find a counterexample in \cite[p. 426]{S93} for FIOs.

 \textbf{Remark 3.} Obviously, Theorem 1.3 can be considered as a supplement of Theorem C to $\delta=1$.

 \textbf{Remark 4.} More interestingly, one can check that $m_p>(\rho-n)(\frac 1p-\frac12)+\frac{n}{2}(\rho-\delta)$ when $p\leq 1$ and
 $$1-\frac{n-1}{n}(2\rho_0-\rho)<\delta\leq 1.$$
 As $S^{m}_{\rho,\delta}\subset L^{\infty}S^{m}_{\rho}$ for any $m\in \mathbb{R},0\leq\rho,\delta\leq1$,
 by Theorem 1.1 we can improve Theorem C strictly when $1-\frac{n-1}{n}(2\rho_0-\rho)<\delta<1$.
 By using Fefferman-Stein interpolation, we can also improve Theorem A strictly when $1-\frac{n-1}{n}(2\rho_0-\rho)<\delta<1$ and $1<p<2$. So, it is expected that Theorem A and Theorem C can be improved
 when $\rho<\delta<1$ and $p<2$. We will discuss this problem carefully later.

 We use some new techniques to improve the corresponding estimates. Two of them are particularly important. Firstly, for an $h^1$ atom supported in $B(0,r)$ when $r<1$, we directly utilize the properties of its Fourier transform. This enables us to overcome a critical difficult arising from the part $\sum\limits_{r^{-1}<2^{j}<r^{-\frac {1}{\rho}}}$
 when $p=1$ in Theorem 1.3. Secondly, we introduce a new "except set" $P_r$ instead of the one in Seeger-Sogge-Stein \cite{SSS91}.  Through the properties of $P_r$, we are able to simplify and enhance some crucial computations..

 In Section 2 we introduce some notations, basic inequalities and lemmas. We prove Theorem 1.3 in Section 3 when $p=1$ and Section 4 when $p<1$. In two proofs, we use different decompositions.
 When $p=1,r<1$, with the help of (\ref{fiohp3.5}) we can deal with the term $\sum\limits_{2^{j\rho}r\leq 1}$ directly in Section 3.2.2. However, when $p<1$, we can
 only deal with the term $\sum\limits_{2^{j}r\leq 1}$. Furthermore, when $p<1$, we need the $h^1-L^1$-boundedness instead of corresponding $L^2$-boundedness in (\ref{fiohp4.8}).
 Perhaps one could use a relatively complex uniform proof for $p\leq 1$ with the help of a similar inequality to (\ref{fiohp3.5}). For completeness, we include proofs of some lemmas in the appendices, which have been implicitly proven and used in previous literature but we can not find explicit direct reference.

 Throughout this note, $A\lesssim B$ means that $A\leq CB$ for some constant $C$. The notation $A\approx B$ means that $A\lesssim B$ and $B\lesssim A$. Without explanation,
 the implicit constants given in this note may vary from occasion to occasion but depend only on $n,\rho,p,\lambda$ and finitely many semi-norms of $A_N,B_{N,M}$.

 \section{Some notations and lemmas}

 \hspace{4mm} Let $B_r(x_0)$ be the ball in $\mathbb{R}^n$ centered at $x_0$ with a radius of $r$.

 At first we introduce the theory of atom decomposition of $h^p$ with $0<p\leq 1$.  A function $b$ is called a $L^2$-atom for $h^p(\mathbb{R}^n)$ if\\
  (1) $b$ is supported in $B_r(x_0)$ for some $x_0\in \mathbb{R}^n$;\\
  (2) $\|b\|_2\leq r^{n(\frac 12-\frac 1p)}$;\\
  (3) when $r<1$, $\int_{\mathbb{R}^n}(x-x_0)^{\alpha}b(x)dx=0$ for all multi-indices $\alpha$ with $|\alpha|\leq n(\frac 1p-1)$. When $r\geq 1$, $b$ only needs to satisfy (1) and (2).

 It is also well known that a distribution $f\in h^p(\mathbb{R}^n)$ has an atomic decomposition $f=\sum\limits_{j}\lambda_jb_j,$ with $\sum_{j}|\lambda_j|^p<+\infty$.
 Moreover, there holds $$\|f\|_{h^p(\mathbb{R}^n)}\approx\inf\{(\sum_{j}|\lambda_j|^p)^{\frac 1p}:f=\sum\limits_{j}\lambda_jb_j\}.$$

 We recall a lemma for PDOs that will be used in the proof when $p=1,\rho=0$.
 \begin{lemma}\cite[Theorem 1.2]{GZ22}
 If $a\in L^\infty S^{-n}_{0}$, then $T_a$ is bounded from $h^1$ to $L^1$.
 \end{lemma}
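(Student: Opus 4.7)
The plan is to invoke the atomic characterization of $h^1(\mathbb{R}^n)$, reducing the problem to showing a uniform bound $\|T_a b\|_{L^1}\lesssim 1$ for every $L^2$-atom $b$ of $h^1$: that is, $b$ supported in a ball $B=B_r(x_0)$, with $\|b\|_{L^2}\le r^{-n/2}$, and satisfying $\int b=0$ when $r<1$.

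I would split $\|T_ab\|_{L^1}=I_{\mathrm{near}}+I_{\mathrm{far}}$, where $I_{\mathrm{near}}$ is the integral over $2B$ and $I_{\mathrm{far}}$ over $\mathbb{R}^n\setminus 2B$. For $I_{\mathrm{near}}$, Cauchy--Schwarz combined with the $L^2$-boundedness of $T_a$ yields
$$I_{\mathrm{near}}\le |2B|^{1/2}\|T_ab\|_{L^2}\lesssim r^{n/2}\cdot r^{-n/2}=1.$$
The required $L^2$-boundedness is classical for rough amplitudes in $L^\infty S^m_\rho$ under the threshold $m\le n(\rho-1)/2$; here $m=-n\le -n/2=n(0-1)/2$, so the hypothesis is met.

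For $I_{\mathrm{far}}$, I would perform a Littlewood--Paley decomposition $a=\sum_{j\ge 0}a_j$ with $a_j$ supported in $\{|\xi|\sim 2^j\}$, and exploit the vanishing moment of $b$ (when $r<1$) by writing
$$T_{a_j}b(x)=\int_B [K_j(x,y)-K_j(x,x_0)]\,b(y)\,dy,$$
where $K_j$ is the kernel of $T_{a_j}$. A Taylor expansion in $y$ combined with integration by parts in $\xi$ should yield the Calder\'on--Zygmund-type bound
$$|K_j(x,y)-K_j(x,x_0)|\lesssim\min(2^jr,1)\cdot |x-x_0|^{-N}$$
for $y\in B$, $|x-x_0|\ge 2r$, and any $N$. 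Integrating over the far region and summing over $j$, one splits at $2^j\sim r^{-1}$: the small-$j$ contribution is tamed by the factor $2^jr$, while the large-$j$ part must be handled by reverting to $L^2$ estimates for a suitable subsum, using almost-orthogonality of the $T_{a_j}$. The case $r\ge 1$ requires no cancellation and is handled directly by $\|b\|_{L^1}\lesssim r^{n/2}$ together with kernel decay of sufficiently high order.

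The chief obstacle is that for $\rho=0$ the integration by parts in $\xi$ produces no gain in $2^{-j}$, so a naive summation over dyadic frequencies fails to converge. The argument must therefore combine, in a balanced way, the atom's cancellation, the $L^2$-normalization, and an almost-orthogonality argument for the high-frequency tail --- rather than relying on pointwise kernel bounds alone. The borderline nature of $m=-n$ (exactly at the $L^2$ threshold) is what makes this the genuinely critical endpoint and forces the combination of techniques outlined above.
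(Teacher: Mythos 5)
Your reduction to atoms and the near-ball estimate via $L^2$-boundedness match the paper, but both halves of your far-field argument have genuine gaps, and they occur exactly where $\rho=0$ hurts. First, for $r\ge 1$ (the only case the paper proves from scratch --- for atoms with $r<1$ it simply cites \cite[Theorem 1.2]{GZ22}), your plan to use ``kernel decay of sufficiently high order'' is not available: since $\rho=0$, $\xi$-derivatives of $a$ gain nothing in $|\xi|$, so integration by parts gives only $|K_j(x,y)|\lesssim |x-y|^{-N}$ \emph{uniformly in $j$} for the dyadic pieces, the sum over $j$ of these pointwise bounds diverges, and there is no usable pointwise bound for the full kernel at the exponent $m=-n$. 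The paper avoids kernels altogether: after $n$ integrations by parts it writes $|x|^{n}T_ab(x)$ as a finite sum of terms $T_{a_\alpha}(y^\beta b)(x)$ with $a_\alpha=\partial^\alpha_\xi a\in L^\infty S^{-n}_0$, $|\alpha|+|\beta|=n$, and then applies Cauchy--Schwarz on $\{|x|\ge 2r\}$ using $\||x|^{-n}\|_{L^2(|x|\ge 2r)}\approx r^{-n/2}$, the $L^2$-boundedness of each $T_{a_\alpha}$, and $\|y^\beta b\|_2\lesssim r^{|\beta|-n/2}$; the resulting bound $r^{|\beta|-n}$ is admissible precisely because $r\ge 1$.

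Second, for $r<1$ your treatment of the high-frequency tail $2^j\gtrsim r^{-1}$ on the far region is only a gesture: ``reverting to $L^2$ estimates \ldots using almost-orthogonality'' does not close it, because the far region $\{|x-x_0|\ge 2r\}$ has infinite measure (so an unweighted $L^2$ bound gives no $L^1$ control there), and weighted bounds such as $\||x-x_0|^{N}T_{a_j}b\|_2$ lose in the terms where all $\xi$-derivatives fall on $a_j$, again because $\rho=0$ yields no $2^{-j}$ gain. The mechanism that actually handles this regime (in \cite{GZ22}, and in Section 3.2.2 of the present paper) is the cancellation of the atom expressed on the Fourier side through $\int_{\mathbb{R}^n}|\widehat b(\xi)||\xi|^{-n}d\xi\lesssim\|b\|_{H^1}\lesssim 1$, which replaces the divergent sum over $j$ by a single convergent integral; your outline never invokes this (nor the citation to \cite{GZ22} that the paper uses to dispose of the $r<1$ case), so the argument does not close. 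A small but telling slip: $m=-n$ is not ``exactly at the $L^2$ threshold'' --- for $L^\infty S^{m}_{0}$ the $L^2$ threshold is $m=-n/2$, while $m=-n$ is the $h^1$--$L^1$ endpoint, which is exactly why purely $L^2$-based fixes cannot carry the far-field estimate on their own.
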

 In fact, Theorem 1.2 in \cite{GZ22} is only proved for $H^1$. However, one can easily verify that the proof of \cite[Theorem 1.2]{GZ22} is also valid for $h^1$.
 For completeness, we include the proof of Lemma 2.1 in Appendix A.

 Secondly, thank to the SND condition (\ref{fiohp1.3}), for any $E\subset \mathbb{R}^n$, $\xi\in S^{n-1}$ and a real-value function $g$, there holds
 \begin{align}
 \int_{\nabla_{\xi}\phi(x,\xi)\in E}\left|g\left(\nabla_{\xi}\phi(x,\xi)\right)\right|dx\lesssim \int_{E}|g(y)|dy.\label{fiohp2.1}
 \end{align}
 This inequality will be used frequently below.

 Thirdly, we introduce the Littlewood-Paley dyadic decomposition and the second dyadic decomposition.

 Take a nonnegative function $\Psi_0\in C^{\infty}_{c}(B_{2})$ such that $\Psi_0\equiv 1$ on $B_{1}$, and
 set $\Psi(\xi)=\Psi_0(\xi)-\Psi_0(2\xi)$. It is easy to see that $\Psi$ is supported in $\{\xi\in \mathbb{R}^n: \frac 12<|\xi|<2\}$ and
 $$\Psi_0(\xi)+\sum^{\infty}_{j=1}\Psi(2^{-j}\xi)=1, \forall \xi\in \mathbb{R}^n.$$
 For the sake of convenience, we denote $\Psi_j(\xi)=\Psi(2^{-j}\xi)$ when $j>0$.

 For every $j>0$, there  are no more than $C2^{j(n-1)\rho_0}$ points $\xi^\nu_j\in \mathbb{S}^{n-1}$ ($\nu=1,2,\ldots, J$ with $J\leq C2^{j(n-1)\rho_0}$) such that
 \begin{align*}
 |\xi^{\nu_1}_j-\xi^{\nu_2}_j|\geq 2^{-j\rho_0-2}\textrm{ if } \nu_1\neq \nu_2 \textrm{ and }
 \inf_{\nu}|\xi^{\nu}_j-\xi|\leq 2^{-j\rho_0}, \forall \xi\in \mathbb{S}^{n-1}.
 \end{align*}

 For $j,\nu>0$, set
 \begin{align*}
 \Gamma^{\nu}_{j}=\{\xi: |\frac{\xi}{|\xi|}-\xi^{\nu}_{j}|\leq 2^{2-j\rho_0}\}\textrm{ and }A^{\nu}_{j}=\Gamma^{\nu}_{j}\cap\{\xi: 2^{j-1}<|\xi|<2^{j+1}\}.
 \end{align*}
 Then by  using the same arguments as in \cite[p. 20-21]{CIS21}, we can construct a partition of unity $\{\psi^{\nu}_{j}\}_{\nu=1}^J$ associated with the family $\{\Gamma^{\nu}_{j}\}_{\nu=1}^J$ for any $j>0$.
 Each $\psi^{\nu}_{j}$ is homogeneous of degree 0, supported in $\Gamma^{\nu}_{j}$, and satisfies that
 \begin{align}
 \sum^{J}_{\nu=1}\psi^{\nu}_{j}(\xi)\equiv1 \textrm{ if }\xi\neq 0,\left|\nabla^{k}_{\xi}\psi^{\nu}_{j}(\xi)\right|\leq C_{k}|\xi|^{-k}2^{jk\rho_0}, k\in\mathbb{N}, \label{fiohp2.2}
 \end{align}
 where $C_k$ depends only on $k$.

 Below we usually use the following notations,
 \begin{align*}
 &a_j(x,\xi)=a(x,\xi)\Psi_j(\xi),T_{\phi,a_j}f(x)=\int_{\mathbb{R}^{n}}e^{i\phi(x,\xi)}a_j(x,\xi)\widehat{f}(\xi)d\xi,j\geq 0;\\
 &T^{\nu}_{j}f(x)=\int_{\mathbb{R}^{n}}e^{i\phi(x,\xi)}a_j(x,\xi)\psi^{\nu}_{j}(\xi)\widehat{f}(\xi)d\xi, j,\nu>0.
 \end{align*}
 One can easily see that $T_{\phi,a_j}f,T^{\nu}_{j}f$ are well defined for any $f\in L^{1}$.

 Set
 $$h^{\nu}_{j}(x,\xi)=\phi(x,\xi)-\xi\cdot \nabla_{\xi}\phi(x,\xi^{\nu}_{j}).$$
 By using similar arguments in \cite[p. 407]{S93}, the following lemma has been implicitly proved in some literatures, such as \cite{CIS21,DGZ23,FS14,SZ23}.
 \begin{lemma}
 If $j>0$ and $\phi\in \Phi^{2}$, then for any $\xi\in A^{\nu}_{j}$ we have
 \begin{align*}
 &|\partial^{N}_{\xi^{\nu}_{j}}\nabla^{M}_{\xi}h^{\nu}_{j}|\lesssim 2^{-j(N\rho+M\rho_0)}, \textrm{if $N,M\geq 0$ with $N+M\geq 1$};\\
 &|\partial^{N}_{\xi^{\nu}_{j}}\nabla^{M}_{\xi}\psi^{\nu}_{j}|\lesssim 2^{-j(N+M(1-\rho_0))}\leq 2^{-j(N\rho+M\rho_0)}, \textrm{if $N,M\geq 0$}.
 \end{align*}
 The constants depend only on $n,\rho,N,M$ and finitely many semi-norms of $\phi\in \Phi^{2}$.
 \end{lemma}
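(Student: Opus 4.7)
The plan is a direct computation combining the degree-one homogeneity of $\phi$ with Euler's identity, the $\Phi^2$-bounds, and the geometry of $A^\nu_j$. Since the $\Phi^2$-semi-norms are rotation-invariant, after a rotation we may assume $\xi^\nu_j = e_1$; then $\partial_{\xi^\nu_j} = \partial_{\xi_1}$, and on $A^\nu_j$ one has $|\xi|\in[2^{j-1},2^{j+1}]$ together with $|\omega - e_1|\lesssim 2^{-j\rho_0}$ for $\omega := \xi/|\xi|$.

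The key analytic input is Euler's identity $\xi\cdot\nabla_\xi\phi(x,\xi) = \phi(x,\xi)$. Differentiating in $\xi_k$ and evaluating at $e_1$ yields $\partial^2_{\xi_1\xi_k}\phi(x,e_1) = 0$ for every $k$; iterating gives $\partial^N_{\xi_1}\phi(x,e_1) = 0$ for $N\geq 2$ and $\partial^N_{\xi_1}\partial_{\xi_k}\phi(x,e_1) = 0$ for every $N\geq 1$ and every $k$. The bound on $h^\nu_j$ then follows case-by-case, combining these vanishings with Taylor expansion of the angular profile and the $\Phi^2$-estimate $|\nabla^{N+M}_\xi\phi(x,\xi)|\lesssim|\xi|^{1-N-M}$. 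For $M\geq 2$, the linear-in-$\xi$ term $\xi\cdot\nabla_\xi\phi(x,e_1)$ is annihilated by two $\nabla_\xi$-derivatives, so $\partial^N_{\xi_1}\nabla^M_\xi h^\nu_j = \partial^N_{\xi_1}\nabla^M_\xi\phi$ has size $\lesssim 2^{j(1-N-M)}\leq 2^{-j(N\rho+M\rho_0)}$, using $M(1-\rho_0)\geq 1$ and $\rho\leq 1$. For $M=1$, $N\geq 1$, the constant term disappears after one $\partial_{\xi_1}$, and factoring out the radial $|\xi|^{-N}$ and mean-value-estimating the angular profile (which vanishes at $e_1$ by the Euler identity above) gives the bound. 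For $M=1$, $N=0$, $|\nabla_\xi h^\nu_j|\lesssim 2^{-j\rho_0}$ directly by mean value on the sphere. The decisive case is $M=0$, $N=1$:
$$\partial_{\xi_1}h^\nu_j = \partial_{\xi_1}\phi(x,\omega) - \partial_{\xi_1}\phi(x,e_1),$$
whose first-order Taylor coefficient at $e_1$ vanishes since $\partial^2_{\xi_1\xi_k}\phi(x,e_1) = 0$ for every $k$, so $|\partial_{\xi_1}h^\nu_j|\lesssim|\omega-e_1|^2\lesssim 2^{-2j\rho_0}\leq 2^{-j\rho}$, using $2\rho_0\geq\rho$ (which holds on $[0,1]$). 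The case $M=0$, $N\geq 2$ follows from the analogous Taylor argument applied to $\partial^N_{\xi_1}\phi(x,\xi)$.

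For $\psi^\nu_j$, the tangential estimate $|\nabla^M_\xi\psi^\nu_j|\lesssim 2^{-jM(1-\rho_0)}$ is exactly \eqref{fiohp2.2}. Since $\psi^\nu_j$ is homogeneous of degree $0$, Euler gives $\omega\cdot\nabla_\xi\psi^\nu_j = 0$, so $\partial_{\xi_1}\psi^\nu_j = (e_1-\omega)\cdot\nabla_\xi\psi^\nu_j$ is of size $\lesssim 2^{-j\rho_0}\cdot 2^{-j(1-\rho_0)} = 2^{-j}$. Iterating, each successive $\partial_{\xi_1}$ uses the same splitting (with Euler applied to the degree $-k$ function $\partial^k_{\xi_1}\psi^\nu_j$ to express its radial component in terms of the function itself), which produces a factor $2^{-j}$ per radial derivative, while each remaining $\nabla_\xi$ contributes the tangential factor $2^{-j(1-\rho_0)}$, giving the general bound $2^{-j(N+M(1-\rho_0))}$.

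The principal difficulty is the case $N=1$, $M=0$ for $h^\nu_j$: the pointwise estimate $|\omega-e_1|\lesssim 2^{-j\rho_0}$ alone gives only $2^{-j\rho_0}$, which falls short of the claim $2^{-j\rho}$ when $\rho>\tfrac12$. Two distinct Euler-based vanishings (the constant value and the linear Taylor coefficient of $\partial_{\xi_1}\phi(x,\cdot)$ at $e_1$) are needed to push the remainder to quadratic order in $|\omega-e_1|$, and the inequality $2\rho_0\geq\rho$, immediate from $\rho_0=\min\{\rho,\tfrac12\}$ and $\rho\leq 1$, is exactly what makes $2^{-2j\rho_0}$ dominate $2^{-j\rho}$.
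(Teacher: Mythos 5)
Your proof is correct and follows essentially the same route as the paper's Appendix B: Euler-identity/homogeneity vanishings at $\xi^{\nu}_{j}$ (the paper states them as $\partial^{N}_{\xi^{\nu}_{j}}h^{\nu}_{j}$ and $\partial^{N}_{\xi^{\nu}_{j}}\nabla_{\xi}h^{\nu}_{j}$ vanishing on the ray $\lambda\xi^{\nu}_{j}$) combined with Taylor/mean-value estimates over the angular spread $2^{-j\rho_0}$, and, for $\psi^{\nu}_{j}$, the splitting of $\partial_{\xi^{\nu}_{j}}$ into a radial part plus an $O(2^{-j\rho_0})$ tangential part. The only difference is bookkeeping — you expand at $\xi^{\nu}_{j}$ on the unit sphere after factoring out homogeneity, whereas the paper uses the mean value theorem along the segment from $(\xi^{\nu}_{j}\cdot\xi)\xi^{\nu}_{j}$ to $\xi$ and an explicit polar-coordinate computation — while the decisive inequalities ($M(1-\rho_0)\geq 1$ for $M\geq 2$, $2\rho_0\geq\rho$, $1-2\rho_0\leq N(1-\rho)$) are identical.
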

 For completeness, we include the proof of Lemma 2.2 in Appendix B.

 As a direct corollary, we give the following lemma.
 \begin{lemma}
 If $N,M\geq 0,j>0$, $a\in L^{\infty}S^{m}_{\rho}$ and $\phi\in \Phi^{2}$, then for any $\xi\in A^{\nu}_{j}$ there holds
 $$\left|\partial^{N}_{\xi^{\nu}_{j}}\nabla^{M}_{\xi}(e^{ih^{\nu}_{j}}a_j\psi^{\nu}_{j})\right|\leq C2^{j(m-N\rho-M\rho_0)},$$
 where $C$ depends only on $n,m,\rho,N,M$ and finitely many semi-norms of $a\in L^{\infty}S^{m}_{\rho}$ and $\phi\in \Phi^{2}$.
 \end{lemma}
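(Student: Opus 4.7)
The plan is to derive Lemma 2.3 as a direct consequence of Lemma 2.2 by combining the Leibniz rule with Faà di Bruno's formula. First I would apply the Leibniz rule to $\partial^{N}_{\xi^{\nu}_{j}}\nabla^{M}_{\xi}\bigl(e^{ih^{\nu}_{j}}a_j\psi^{\nu}_{j}\bigr)$ to express it as a finite sum of triple products in which the three factors are derivatives of $e^{ih_j^\nu}$, of $a_j$, and of $\psi_j^\nu$ respectively, with derivative orders $(N_1,M_1),(N_2,M_2),(N_3,M_3)$ satisfying $N_1+N_2+N_3=N$ and $M_1+M_2+M_3=M$. It then suffices to show that each factor carries exactly its share of the target weight $2^{j(m-N\rho-M\rho_0)}$, after which summing over the finitely many splittings closes the estimate.

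The factor involving $\psi^{\nu}_j$ is bounded by $2^{-j(N_3\rho+M_3\rho_0)}$ directly from the second estimate of Lemma 2.2, after using $\rho_0\le\rho$ to pass from the $2^{-j(N_3+M_3(1-\rho_0))}$ form to the form asserted here. For the factor coming from $a_j=a\Psi_j$, I would apply Leibniz once more, inserting (\ref{fiohp1.5}) for derivatives of $a$ and the standard estimate $|\nabla^k_\xi\Psi_j|\lesssim 2^{-jk}$ on the support of $\Psi_j$; since $\xi\in A_j^\nu$ forces $|\xi|\sim 2^j$, this yields a bound of $2^{j(m-(N_2+M_2)\rho)}$, which is dominated by $2^{j(m-N_2\rho-M_2\rho_0)}$ because $\rho_0\le\rho$.

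The only substantive factor is the derivative of the oscillatory exponential, and here I would apply Faà di Bruno's formula to write $\partial^{N_1}_{\xi^{\nu}_j}\nabla^{M_1}_\xi e^{ih^{\nu}_j}$ as $e^{ih^{\nu}_j}$ times a finite sum of products of the form $\prod_k \partial^{n_k}_{\xi^{\nu}_j}\nabla^{m_k}_\xi h^{\nu}_j$, where each pair satisfies $n_k+m_k\ge 1$ and $\sum_k n_k=N_1$, $\sum_k m_k=M_1$. Applying the first estimate of Lemma 2.2 to each factor bounds it by $2^{-j(n_k\rho+m_k\rho_0)}$, so each product is bounded by $2^{-j(N_1\rho+M_1\rho_0)}$. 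Multiplying the three factor estimates together reproduces precisely the claimed weight $2^{j(m-N\rho-M\rho_0)}$.

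The main obstacle does not really lie in this corollary, which is essentially combinatorial bookkeeping; the genuine analytic content has already been absorbed into Lemma 2.2 (proved in Appendix B). The single subtle point one must not overlook is the compatibility inequality $\rho_0\le\rho$: it is precisely this inequality that lets the isotropic $\rho$-type decay coming from the $L^\infty S^m_\rho$ definition of $a$ (and from the dyadic cutoff $\Psi_j$) be reconciled with the anisotropic $(\rho,\rho_0)$-type decay appearing in Lemmas 2.2 and 2.3.
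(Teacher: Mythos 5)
Your proposal is correct and follows essentially the same route as the paper: Fa\`a di Bruno for $\partial^{N_1}_{\xi^{\nu}_{j}}\nabla^{M_1}_{\xi}e^{ih^{\nu}_{j}}$ via the first estimate of Lemma 2.2, the symbol estimate (\ref{fiohp1.5}) together with the cutoff bound for the $a_j$ factor, the second estimate of Lemma 2.2 for $\psi^{\nu}_{j}$, and the Leibniz rule to combine, with $\rho_0\le\rho$ reconciling the weights (the paper merely groups $a_j\psi^{\nu}_{j}$ in one Leibniz step rather than splitting three ways, which is immaterial). The only nitpick is your justification of the passage from $2^{-j(N+M(1-\rho_0))}$ to $2^{-j(N\rho+M\rho_0)}$: this uses $\rho\le 1$ and $\rho_0\le\frac12$ rather than $\rho_0\le\rho$, but the needed inequality is already stated in Lemma 2.2, so nothing is lost.
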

 \begin{proof}
 By Lemma 2.2 and the product rule, for any $k,l\geq 0,j>0$ we can get that
 \begin{align*}
 |\partial^{k}_{\xi^{\nu}_{j}}\nabla^{l}_{\xi}e^{ih^{\nu}_{j}}|
 \lesssim &\sum^{k+l}_{t=1}\sum_{\begin{subarray}{c}k_1+\cdots+k_t=k,l_1+\cdots+l_t=l\\k_1+l_1,\ldots,k_t+l_t>0\end{subarray}}
 |\partial^{k_1}_{\xi^{\nu}_{j}}\nabla^{l_1}_{\xi}h^{\nu}_{j}\cdots \partial^{k_1}_{\xi^{\nu}_{j}}\nabla^{l_1}_{\xi}h^{\nu}_{j}|\nonumber\\
 \lesssim &\sum^{k+l}_{t=1}\sum_{\begin{subarray}{c}k_1+\cdots+k_t=k,l_1+\cdots+l_t=l\\k_1+l_1,\ldots,k_t+l_t>0\end{subarray}}
 2^{-j(k_1\rho+l_1\rho_0)}\cdots 2^{-j(k_t\rho+l_t\rho_0)}\nonumber\\
 \lesssim &2^{-j(k\rho+l\rho_0)}.
 \end{align*}
 Similarly, thank to the facts $a\in L^{\infty}S^{m}_{\rho}$ and $\rho_0\leq \rho$, by Lemma 2.2 we get that
 \begin{align*}
 |\partial^{k}_{\xi^{\nu}_{j}}\nabla^{l}_{\xi}(a_j\psi^{\nu}_{j})|
 \lesssim &\sum_{k_1+k_2=k}\sum_{l_1+l_2=l}
 |\nabla^{k_1+l_1}_{\xi}a_j| |\partial^{k_2}_{\xi^{\nu}_{j}}\nabla^{l_2}_{\xi}\psi^{\nu}_{j}|\nonumber\\
 \lesssim &\sum_{k_1+k_2=k}\sum_{l_1+l_2=l}2^{j(m-(k_1+l_1)\rho)}2^{-j(k_2\rho+l_2\rho_0)}\lesssim 2^{j(m-k\rho-l\rho_0)}.
 \end{align*}
 Therefore, by the chain rule, we show that
 \begin{align*}
 |\partial^{N}_{\xi^{\nu}_{j}}\nabla^{M}_{\xi}(e^{ih^{\nu}_{j}}a_j\psi^{\nu}_{j})|\lesssim \sum_{N_1+N_2=N}\sum_{M_1+M_2=M}2^{-j(N_1\rho+M_1\rho_0)}2^{j(m-N_2\rho-M_2\rho_0)}=2^{j(m-N\rho-M\rho_0)}.
 \end{align*}
 This finishes the proof.
 \end{proof}

 Fourthly, we define the "except set" and give some basic estimates.

 When $r\geq 1$, it is very simple. We set
 $$\widetilde{B}_{r}=\bigcup_{\xi\in S^{n-1}}\{x:|\nabla_{\xi}\phi(x,\xi)|\leq 3r\}.$$
 Take a $\xi_0\in S^{n-1}$. As $\phi\in \Phi^2$, for any $\xi\in S^{n-1}$ it holds
 $$|\nabla_{\xi}\phi(x,\xi_0)-\nabla_{\xi}\phi(x,\xi)|\leq \pi\sup_{\eta\in S^{n-1}}|\nabla^2_{\xi}\phi(x,\eta)|\leq \pi B_{2,0}.$$
 For any $x\in \widetilde{B}_{r}$, there exists a $\xi\in S^{n-1}$ such that $|\nabla_{\xi}\phi(x,\xi)|\leq 3r$. So we have
 $$|\nabla_{\xi}\phi(x,\xi_0)|\leq 3r+\pi B_{2,0}\leq (3+\pi B_{2,0})r$$
 which yields that
 $$\widetilde{B}_{r}\subset \{x:|\nabla_{\xi}\phi(x,\xi_0)|\leq (3+\pi B_{2,0})r\}.$$
 From (\ref{fiohp2.1}), we get that
 \begin{align}
 |\widetilde{B}_{r}|\lesssim |\{z:|z|\leq (3+\pi B_{2,0})r\}|\lesssim r^n.\label{fiohp2.3}
 \end{align}

 When $0<r<1$, we set
 $$P_{r}=\bigcup_{\xi\in S^{n-1}}\{x: |\phi(x,\xi)|\leq 3r \textrm{ and }  |\nabla_{\xi}\phi(x,\xi)|\leq 3r^{\lambda_{\rho}}\}.$$
 For the set $P_r$, we can get the following lemma.
 \begin{lemma}
 If $r<1$ and $\phi\in \Phi^{2}$ satisfies the SND condition (\ref{fiohp1.3}), then there holds
 \begin{align}
 |P_{r}|\lesssim r.\label{fiohp2.4}
 \end{align}
 On the other hand, for any $x\notin P_{r}$, $|y|<r$, $j>0$ and $\xi\in S^{n-1}$, we can get that
 \begin{align}
 &1+2^{j\rho}|\xi\cdot\nabla_{\xi}\phi(x,\xi)|+2^{j\rho_0}|\nabla_{\xi}\phi(x,\xi)|\nonumber\\
 \lesssim &(1+2^{j\rho}|\xi\cdot(\nabla_{\xi}\phi(x,\xi)-y)|+2^{j\rho_0}|\nabla_{\xi}\phi(x,\xi)-y|)^{\frac{1}{\lambda_{\rho}}}.\label{fiohp2.5}
 \end{align}
 The constants here depend only on $n,\lambda$ and finitely many semi-norms of $\phi\in \Phi^{2}$.
 \end{lemma}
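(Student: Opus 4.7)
For the measure estimate of $P_r$, my plan is a covering argument on $S^{n-1}$ combined with the change-of-variables inequality (\ref{fiohp2.1}) afforded by the SND condition. First I would choose a maximal $r^{\lambda_{\rho}}$-separated set $\{\xi^k\}_{k=1}^K$ on $S^{n-1}$ with $K\lesssim r^{-(n-1)\lambda_{\rho}}$. If $x$ lies in the defining set for some $\xi\in S^{n-1}$ and $|\xi-\xi^k|\leq r^{\lambda_{\rho}}$, then the uniform bound $|\nabla_\xi^2\phi(x,\eta)|\lesssim 1$ on unit frequencies (coming from $\phi\in\Phi^{2}$) together with the mean value theorem gives $|\nabla_\xi\phi(x,\xi^k)|\lesssim r^{\lambda_{\rho}}$; since $\lambda_{\rho}\geq 1/2$ forces $r^{2\lambda_{\rho}}\leq r$, one also gets $|\phi(x,\xi^k)|\lesssim r$. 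Hence $P_r$ sits inside a union of $K$ enlarged pieces of the same form. Applying (\ref{fiohp2.1}) and Euler's identity $\phi(x,\xi^k)=\xi^k\cdot\nabla_\xi\phi(x,\xi^k)$, each piece pushes forward to $\{z:\,|\xi^k\cdot z|\lesssim r,\ |z|\lesssim r^{\lambda_{\rho}}\}$, whose volume is $\lesssim r\cdot r^{(n-1)\lambda_{\rho}}$. Summing gives $|P_r|\lesssim K\cdot r^{1+(n-1)\lambda_{\rho}}\lesssim r$.

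For (\ref{fiohp2.5}), write $u=\nabla_\xi\phi(x,\xi)$ so that $\phi(x,\xi)=\xi\cdot u$ by Euler. The assumption $x\notin P_r$, specialized to this $\xi$, yields either $|\xi\cdot u|>3r$ or $|u|>3r^{\lambda_{\rho}}$. In the first case $|u|\geq|\xi\cdot u|>3r>|y|\cdot 3$, so $|u-y|\geq(2/3)|u|$ and $|\xi\cdot(u-y)|\geq(2/3)|\xi\cdot u|$, and the left side of (\ref{fiohp2.5}) is dominated by the right side without ever invoking the exponent $1/\lambda_{\rho}$. The subtler case is when only $|u|>3r^{\lambda_{\rho}}$ holds; since $\lambda_{\rho}\leq 1$ and $r<1$ we still have $|u|>3r$, so $|u-y|\approx|u|$ controls the $2^{j\rho_0}|u|$ summand, but the term $2^{j\rho}|\xi\cdot u|$ must be bounded using only the weak a priori bound $|\xi\cdot u|\leq 3r$ coming from Euler.

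This last estimate is the main obstacle and is precisely where the exponent $1/\lambda_{\rho}$ is forced. The key arithmetic is $2\rho\cdot\rho_0=\rho$ together with $2\rho\cdot\lambda_{\rho}=1$ when $\rho>1/2$, which produces
\[\bigl(2^{j\rho_0}|u-y|\bigr)^{1/\lambda_{\rho}}\gtrsim\bigl(2^{j\rho_0}r^{\lambda_{\rho}}\bigr)^{2\rho}=2^{j\rho}r\gtrsim 2^{j\rho}|\xi\cdot u|,\]
so this term is bounded by the right side of (\ref{fiohp2.5}) raised to $1/\lambda_{\rho}$. When $\rho\leq 1/2$ one has $\lambda_{\rho}=1$ and $\rho_0=\rho$, and the same conclusion follows from the trivial estimate $|\xi\cdot u|\leq|u|\lesssim|u-y|$. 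The remaining pieces $1$ and $2^{j\rho_0}|u|$ on the left side are dominated by the right side trivially (the former since the right side is $\geq 1$, the latter by $2^{j\rho_0}|u-y|$), and collecting the estimates in each case yields (\ref{fiohp2.5}). The entire computation shows that the powers $3r$ and $3r^{\lambda_{\rho}}$ in the definition of $P_r$ are tuned exactly to make this last inequality hold.
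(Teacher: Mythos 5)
Your proof is correct and follows essentially the same route as the paper: covering the sphere by an $r^{\lambda_{\rho}}$-net (the paper uses the second-dyadic-decomposition points $\xi^{\nu}_{j_1}$ with $2^{j_1\rho_0}r^{\lambda_{\rho}}\approx 1$, which is the same thing), perturbing $\phi$ and $\nabla_{\xi}\phi$ to the net points, pushing forward via (\ref{fiohp2.1}) and Euler's identity to slabs of volume $\lesssim r\cdot r^{(n-1)\lambda_{\rho}}$, and for (\ref{fiohp2.5}) the same dichotomy $|\phi(x,\xi)|>3r$ versus $|\nabla_{\xi}\phi(x,\xi)|>3r^{\lambda_{\rho}}$ with the identity $\rho_0=\lambda_{\rho}\rho$ supplying the exponent $1/\lambda_{\rho}$. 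The only difference is organizational (you dominate each term directly, the paper first reduces (\ref{fiohp2.5}) to $2^{j\rho}r\lesssim D^{1/\lambda_{\rho}}$), so no further comment is needed.
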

 \begin{proof} Let $j_1$ be a positive integer such that $1\leq 2^{j_1\rho_0}r^{\lambda_{\rho}}=(2^{j_1\rho}r)^{\lambda_{\rho}}<2$. If $x\in P_r$, then there exists a $\xi_0\in S^{n-1}$ such that
 $$|\phi(x,\xi_0)|\leq 3r \textrm{ and }  |\nabla_{\xi}\phi(x,\xi_0)|\leq 3r^{\lambda_{\rho}}.$$
 In the second dyadic decomposition, we can find $\nu$ such that $|\xi_0-\xi^{\nu}_{j_1}|\leq 2^{-j_1\rho_0}\leq r^{\lambda_{\rho}}$. As $\phi\in \Phi^2$, when
 $\xi\in S^{n-1}$ and $|\xi-\xi_0|\leq r^{\lambda_{\rho}}$, we have
 \begin{align*}
 &|\nabla_{\xi}\phi(x,\xi)|\leq |\nabla_{\xi}\phi(x,\xi_0)|+|\nabla_{\xi}\phi(x,\xi)-\nabla_{\xi}\phi(x,\xi_0)|\\
 \leq &3r^{\lambda_{\rho}}+2|\xi-\xi_0|\sup_{\eta\in S^{n-1}}|\nabla^2_{\xi}\phi(x,\eta)|\leq (3+2B_{2,0})r^{\lambda_{\rho}}.
 \end{align*}
 As $r<1,\frac 12\leq \lambda_{\rho}\leq 1$, by the above inequality we obtain that
 \begin{align*}
 &|\phi(x,\xi^{\nu}_{j_1})|\leq |\phi(x,\xi_0)|+|\phi(x,\xi_0)-\phi(x,\xi^{\nu}_{j_1})|\\
 \leq &3r+2|\xi_0-\xi^{\nu}_{j_1}|\sup_{\xi\in S^{n-1},|\xi-\xi_0|\leq r^{\lambda_{\rho}}}|\nabla_{\xi}\phi(x,\xi)|\\
 \leq &3r+2r^{\lambda_{\rho}}(3+2B_{2,0})r^{\lambda_{\rho}}\leq (9+4B_{2,0})r.
 \end{align*}
 Thus we show that
 $$|\phi(x,\xi^{\nu}_{j_1})|\leq (9+4B_{2,0})r,|\nabla_{\xi}\phi(x,\xi^{\nu}_{j_1})|\leq (3+2B_{2,0})r^{\lambda_{\rho}}.$$
 Since $\phi$ is positively homogeneous of degree 1, it is easy to see that $\phi(x,\xi^{\nu}_{j_1})=\xi^{\nu}_{j_1}\cdot \nabla_{\xi}\phi(x,\xi^{\nu}_{j_1})$. Now we get that
 $$P_{r}\subset \bigcup^{C2^{j_1(n-1)\rho_0}}_{\nu=1}\{x:|\xi^{\nu}_{j_1}\cdot \nabla_{\xi}\phi(x,\xi^{\nu}_{j_1})|\leq (9+4B_{2,0})r \textrm{ and } |\nabla_{\xi}\phi(x,\xi^{\nu}_{j_1})|\leq (3+2B_{2,0})r^{\lambda_{\rho}}\}.$$

 Thank to (\ref{fiohp2.1}) and $1\leq 2^{j_1\rho_0}r^{\lambda_{\rho}}<2$, we get that
 \begin{align*}
 |P_{r}|\leq &\sum^{C2^{j_1(n-1)\rho_0}}_{\nu=1}\left|\{x:|\xi^{\nu}_{j_1}\cdot \nabla_{\xi}\phi(x,\xi^{\nu}_{j_1})|\leq (9+4B_{2,0})r \textrm{ and }
 |\nabla_{\xi}\phi(x,\xi^{\nu}_{j_1})|\leq (3+2B_{2,0})r^{\lambda_{\rho}}\}\right|\\
 \lesssim  &\sum^{C2^{j_1(n-1)\rho_0}}_{\nu=1}\left|\{x:|\xi^{\nu}_{j_1}\cdot x|\leq (9+4B_{2,0})r \textrm{ and }|x|\leq (3+2B_{2,0})r^{\lambda_{\rho}}\}\right|\\
 \lesssim  &2^{j_1(n-1)\rho_0}r^{1+(n-1)\lambda_{\rho}}\lesssim r.
 \end{align*}
 This finishes the proof of (\ref{fiohp2.4}).

 It is easy to see that $1+2^{j\rho}r+2^{j\rho_0}r^{\lambda_{\rho}}\leq 2(1+2^{j\rho}r)$ as $\lambda_{\rho}\leq 1$ and $\rho_0=\lambda_{\rho}\rho$. So, when $|y|<r,j>0$ and $\xi\in S^{n-1}$,
 to prove (\ref{fiohp2.5}), it is enough for us to show that
 $$2^{j\rho}r\lesssim (1+2^{j\rho}|\xi\cdot(\nabla_{\xi}\phi(x,\xi)-y)|+2^{j\rho_0}|\nabla_{\xi}\phi(x,\xi)-y|)^{\frac{1}{\lambda_{\rho}}}.$$
 As $\phi$ is positively homogeneous of degree 1, for any $x\notin P_r$ and $\xi\in S^{n-1}$, there must be
 $$|\xi\cdot\nabla_{\xi}\phi(x,\xi)|=|\phi(x,\xi)|>3r\textrm{ or }|\nabla_{\xi}\phi(x,\xi)|>3r^{\lambda_{\rho}}.$$

 If $|\xi\cdot\nabla_{\xi}\phi(x,\xi)|>3r$, due to the fact $\lambda_{\rho}\leq 1$, we have
 $$(1+2^{j\rho}|\xi\cdot(\nabla_{\xi}\phi(x,\xi)-y)|+2^{j\rho_0}|\nabla_{\xi}\phi(x,\xi)-y|)^{\frac{1}{\lambda_{\rho}}}\geq (1+2^{j\rho}r)^{\frac{1}{\lambda_{\rho}}}\geq 2^{j\rho}r.$$
 Otherwise, if $|\nabla_{\xi}\phi(x,\xi)|>3r^{\lambda_{\rho}}$, as $\lambda_{\rho}\leq 1$ and $\rho_0=\lambda_{\rho}\rho$, we get that
 \begin{align*}
 (1+2^{j\rho}|\xi\cdot(\nabla_{\xi}\phi(x,\xi)-y)|+2^{j\rho_0}|\nabla_{\xi}\phi(x,\xi)-y|)^{\frac{1}{\lambda_{\rho}}}
 \geq (1+2^{j\rho_0}r^{\lambda_{\rho}})^{\frac{1}{\lambda_{\rho}}}\geq2^{j\rho}r.
 \end{align*}
 This finishes the proof of (\ref{fiohp2.5}).
 \end{proof}

 The following lemma has been implicitly proved and used in many literatures.
 \begin{lemma}(\cite[Lemma 3.1]{RZ23})
  Let $n\geq 1,0\leq\rho\leq 1$, $a\in L^{\infty}S^{m}_{\rho}$ and $\phi\in \Phi^{2}$ satisfies the SND condition (\ref{fiohp1.3}). Then for any $j\in\mathbb{N}$, there holds
 $$\|T_{\phi,a_j}f\|_2\lesssim 2^{j(m-\frac {n(\rho-1)}{2})}\|f\|_2,$$
 where the constant depends only on $n,m,\rho,\lambda$ and finitely many semi-norms $A_{N}, B_{N,M}$.
 \end{lemma}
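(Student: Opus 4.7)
The plan is to prove Lemma 2.5 by the standard $TT^{*}$ argument combined with non-stationary phase via integration by parts in the frequency variable, taking advantage of the fact that the only smoothness assumption on $a$ is in $\xi$. The case $j=0$ is trivial because $a_{0}$ has frequency support in a fixed compact set: the Schwartz kernel of $T_{\phi,a_{0}}$ has rapid decay, so $L^{2}$-boundedness is immediate and the required bound $2^{0\cdot(\cdots)}=1$ holds. Thus I fix $j\geq 1$ and study
\begin{align*}
K(x,y) = \int_{\mathbb{R}^{n}} e^{i(\phi(x,\xi) - \phi(y,\xi))}\, a_{j}(x,\xi)\,\overline{a_{j}(y,\xi)}\, d\xi,
\end{align*}
the kernel of $T_{\phi,a_{j}} T_{\phi,a_{j}}^{*}$. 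Since the integrand is supported in $|\xi|\approx 2^{j}$ and the amplitude is bounded by $2^{2jm}$, one has the trivial estimate $|K(x,y)|\lesssim 2^{j(2m+n)}$.

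Set $v(x,y,\xi):=\nabla_{\xi}\phi(x,\xi)-\nabla_{\xi}\phi(y,\xi)$. The SND hypothesis \eqref{fiohp1.3} together with the uniform upper bound $|\nabla_{x}\nabla_{\xi}\phi|\leq B_{1,1}$ implies that for each $\xi$ the map $x\mapsto \nabla_{\xi}\phi(x,\xi)$ is bi-Lipschitz, so $|v(x,y,\xi)|\gtrsim |x-y|$ uniformly on the support of $a_{j}$. Moreover, since $\phi\in\Phi^{2}$ is homogeneous of degree one in $\xi$, the mean value theorem in the $x$ variable gives $|\nabla_{\xi}^{k}v|\lesssim |x-y|\,|\xi|^{-k}$ for every $k\geq 1$. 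I would then integrate by parts using the first order differential operator
\begin{align*}
L \;=\; \frac{-i\, v\cdot \nabla_{\xi}}{|v|^{2}}, \qquad L\bigl(e^{i(\phi(x,\xi)-\phi(y,\xi))}\bigr) = e^{i(\phi(x,\xi)-\phi(y,\xi))}.
\end{align*}
Each application of the transpose $L^{t}$ produces a factor $|x-y|^{-1}$ from the denominator; when $\nabla_{\xi}$ falls on $a_{j}(x,\xi)\overline{a_{j}(y,\xi)}$ the $L^{\infty}S^{m}_{\rho}$ bound yields the additional factor $2^{-j\rho}$, and when it falls on $v/|v|^{2}$ the derivative estimate on $v$ furnishes a factor of order $|\xi|^{-1}\leq 2^{-j\rho}$ (subordinate in the regime we need). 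Iterating $N$ times therefore gives
\begin{align*}
|K(x,y)| \;\lesssim\; 2^{j(2m+n)}\bigl(1+2^{j\rho}|x-y|\bigr)^{-N}
\end{align*}
for any $N\in\mathbb{N}$.

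Choosing $N>n$ and integrating in $y$,
\begin{align*}
\sup_{x}\int_{\mathbb{R}^{n}} |K(x,y)|\,dy \;\lesssim\; 2^{j(2m+n)}\cdot 2^{-jn\rho} \;=\; 2^{j(2m+n(1-\rho))},
\end{align*}
and the estimate is symmetric in $(x,y)$. Schur's test therefore yields $\|T_{\phi,a_{j}}T_{\phi,a_{j}}^{*}\|_{L^{2}\to L^{2}}\lesssim 2^{j(2m+n(1-\rho))}$, from which the conclusion $\|T_{\phi,a_{j}}\|_{L^{2}\to L^{2}}\lesssim 2^{j(m-n(\rho-1)/2)}$ follows by taking square roots. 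The main technical point — and the only step requiring some care — is the bookkeeping in the iterated integration by parts, verifying that every term arising from differentiating $v/|v|^{2}$ contributes no worse than the leading $2^{-j\rho}(1+2^{j\rho}|x-y|)^{-1}$ gain. Since the hypothesis $a\in L^{\infty}S^{m}_{\rho}$ forbids any $x$-integration by parts, it is essential that all decay be extracted on the frequency side, which is precisely what the combination of the $\Phi^{2}$ homogeneity and the SND condition makes possible.
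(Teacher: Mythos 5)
Your proposal is correct and follows essentially the same route as the paper's Appendix C: a $TT^{*}$ argument in which the kernel of $T_{\phi,a_j}T_{\phi,a_j}^{*}$ is estimated by repeated integration by parts in $\xi$ with the operator built from $\nabla_\xi G$, $G(x,y,\xi)=\phi(x,\xi)-\phi(y,\xi)$, using the lower bound $|\nabla_\xi G|\gtrsim|x-y|$ from the SND condition and the upper bounds $|\nabla^k_\xi G|\lesssim|x-y||\xi|^{1-k}$ from $\Phi^2$, then Schur's test and a square root (the paper phrases this for $S_{\phi,a_j}$ acting on $\widehat f$ and finishes with Plancherel, which is the same argument up to a constant). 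Two minor caveats: the global estimate $|\nabla_\xi\phi(x,\xi)-\nabla_\xi\phi(y,\xi)|\geq c|x-y|$ does not follow merely from the pointwise nondegeneracy and boundedness of the mixed Hessian but is exactly \cite[Proposition 1.11]{FS14}, and the $j=0$ case is not quite trivial since $\nabla^2_\xi\phi$ is singular at $\xi=0$; the paper simply cites \cite[Theorem 1.18]{FS14} for it.
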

 Lemma 3.1 in \cite{RZ23} has been proved only for $a\in S^{m}_{\rho,1}$.
 In Appendix C we use an identical proof to establish this lemma.

 At last, we recall a lemma for the fractional integration.
 \begin{lemma}\cite[Corollary 2.3]{K82}\label{l2.1}
 Let $s>0$, $\frac 1p=1+\frac sn$ and the fractional integration $I_s$ is defined as $\widehat{I_{s}(f)}(\xi)=|\xi|^{-s}\widehat{f}(\xi)$. Then $I_{s}$ is bounded from $H^{p}(\mathbb{R}^n)$ to $H^1(\mathbb{R}^n)$.
 \end{lemma}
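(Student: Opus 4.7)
The plan is to reduce the statement to showing that $I_s$ sends $H^p$-atoms to uniformly bounded $H^1$-molecules and then invoke the Taibleson--Weiss molecular characterization of $H^1$. Since $1/p = 1 + s/n$, the constraint reads $s = n(1/p - 1)$, so an $H^p$ $(p,2)$-atom $a$ supported in $B(x_0, r)$ satisfies $\|a\|_2 \leq r^{n/2 - n/p}$ and $\int x^\alpha a(x)\,dx = 0$ for all $|\alpha| \leq \lfloor s \rfloor$. By linearity and the atomic decomposition, it suffices to produce a constant $C$ depending only on $n,s$ with $\|I_s a\|_{H^1} \leq C$ for every such atom.

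First I would verify the cancellation $\int I_s a = 0$. Writing $\widehat{I_s a}(\xi) = |\xi|^{-s}\widehat{a}(\xi)$ and using the vanishing moments of $a$ to see that $\widehat{a}(\xi) = O(|\xi|^{\lfloor s\rfloor + 1})$ as $\xi \to 0$, one gets $\widehat{I_s a}(0) = 0$. Second, I would check the $L^2$ size bound: splitting $\|I_s a\|_2^2 = \int |\xi|^{-2s}|\widehat{a}(\xi)|^2 d\xi$ at $|\xi| = 1/r$ and combining the two estimates $|\widehat{a}(\xi)| \leq \|a\|_1 \leq r^{-s}$ and $|\widehat{a}(\xi)| \leq C|\xi|^{\lfloor s\rfloor+1} r^{\lfloor s\rfloor+1-s}$ (the latter coming from Taylor expansion of $e^{-i x\cdot \xi}$ around $x = x_0$ together with the moment cancellation), one finds $\|I_s a\|_2 \leq C r^{-n/2}$, which matches the $L^2$ bound expected of an $H^1$-atom of radius $r$. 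Third, I would establish the pointwise off-support decay by using the Riesz kernel representation $I_s a(x) = c_{n,s}\int \frac{a(y)}{|x-y|^{n-s}}\,dy$ and subtracting the Taylor polynomial of $y \mapsto |x-y|^{s-n}$ centered at $y = x_0$ of order $\lfloor s\rfloor$; invoking the moment conditions then yields
\[
|I_s a(x)| \leq C\, \frac{r^{\lfloor s\rfloor + 1 - s}}{|x - x_0|^{n - s + \lfloor s\rfloor + 1}}, \qquad |x - x_0| \geq 2r,
\]
which in turn provides the weighted $L^2$ bound $\bigl\| |\cdot - x_0|^{n/2 + \varepsilon} I_s a\bigr\|_2 \leq C r^{\varepsilon}$ for any $0 < \varepsilon < \lfloor s\rfloor + 1 - s$.

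Combining the three items, $I_s a$ is a $(1, 2, \varepsilon)$-molecule in the sense of Taibleson--Weiss with uniformly bounded molecular parameter, so $\|I_s a\|_{H^1} \leq C$. Summing over the atomic decomposition $f = \sum_j \lambda_j a_j$ with $\sum_j |\lambda_j|^p \approx \|f\|_{H^p}^p$ and using $p \leq 1$ together with the $p$-subadditivity of $\|\cdot\|_{H^1}^p \leq \|\cdot\|_{H^1}$ (more precisely, the molecule-sum bound) yields $\|I_s f\|_{H^1} \leq C \|f\|_{H^p}$. The main technical obstacle is the bookkeeping of $r$-dependence: the size estimate scales like $r^{-n/2}$ while the weighted estimate picks up an $r^{\varepsilon}$ factor, and one must check that the resulting molecular $H^1$-norm, which is a geometric-mean-type combination of these two quantities, is independent of $r$ and of the center $x_0$. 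The vanishing-moment order $\lfloor s\rfloor$ of the atom is exactly what makes the geometry balance, reflecting the sharp Hardy--Littlewood--Sobolev scaling $1/p - 1/1 = s/n$.
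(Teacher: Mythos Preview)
The paper does not give its own proof of this lemma; it is quoted from Krantz \cite[Corollary~2.3]{K82} and invoked as a black box in Section~4.2. Your atom-to-molecule argument is exactly the standard route (and essentially Krantz's) and is correct in outline.

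Two small remarks. First, the summation step is simpler than you indicate: since $0<p\le 1$, the elementary inclusion $\ell^p\subset\ell^1$ already gives $\sum_j|\lambda_j|\le\bigl(\sum_j|\lambda_j|^p\bigr)^{1/p}\approx\|f\|_{H^p}$, and then the ordinary triangle inequality in $H^1$ suffices; no ``$p$-subadditivity of $\|\cdot\|_{H^1}$'' is needed. Second, your pointwise decay step uses the Riesz kernel representation $I_sa(x)=c_{n,s}\int|x-y|^{s-n}a(y)\,dy$, which is clean only for $0<s<n$; the lemma is stated for all $s>0$, and when $s\ge n$ (equivalently $p\le n/(2n)$, which is allowed in the compact-support part of Theorem~1.3) the kernel needs analytic continuation or the decay should be obtained directly on the Fourier side. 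This is a routine adjustment and does not affect the structure of the argument.
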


\section{Proof of Theorem 1.3 when $p=1$}

\subsection{Case: $\rho=0$}

 Take any $\xi_0\in S^{n-1}$. By the SND condition of $\phi$, it is easy to check that $F: x\to \nabla_{\xi}\phi(x,\xi_0)$ is a invertible map from $\mathbb{R}^n$ to $\mathbb{R}^n$.
 So, we can define $\widetilde{a}$ by $$\widetilde{a}(\nabla_{\xi}\phi(x,\xi_0),\xi)=e^{i[\phi(x,\xi)-\nabla_{\xi}\phi(x,\xi_0)\cdot \xi]}a(x,\xi)(1-\Psi_0(\xi)).$$
 By Lemma 2.3 it is easy to see that $\widetilde{a} \in L^{\infty}S^{-n}_{0}$ if $a\in L^{\infty}S^{-n}_{0}$.
 We divide $T_{\phi,a}f$ into two parts as
 \begin{align*}
 T_{\phi,a}f(x)=&T_{\phi,a_0}f(x)+\int_{\mathbb{R}^{n}}e^{i\phi(x,\xi)}a(x,\xi)(1-\Psi_0(\xi))\widehat{f}(\xi)d\xi\\
 =&T_{\phi,a_0}f(x)+\int_{\mathbb{R}^{n}}e^{i\nabla_{\xi}\phi(x,\xi_0)\cdot \xi}e^{i[\phi(x,\xi)-\nabla_{\xi}\phi(x,\xi_0)\cdot \xi]}a(x,\xi)(1-\Psi_0(\xi))\widehat{f}(\xi)d\xi\\
 =&T_{\phi,a_0}f(x)+T_{\widetilde{a}}f(\nabla_{\xi}\phi(x,\xi_0)).
 \end{align*}
  Here $T_{\widetilde{a}}$ is the pseudo-differential operator with amplitude $\widetilde{a} \in L^{\infty}S^{-n}_{0}$. Thus, by Theorem C, (\ref{fiohp2.1}) and Lemma 2.1, we get that
 \begin{align*}
 \|T_{\phi,a}f\|_1\leq \|T_{\phi,a_0}f\|_1+\|T_{\widetilde{a}}f(\nabla_{\xi}\phi(\cdot,\xi_0))\|_1\lesssim \|f\|_{h^1}+\|T_{\widetilde{a}}f\|_1\lesssim \|f\|_{h^1}.
 \end{align*}
 This finishes the proof of Theorem 1.3 when $p=1,\rho=0$.

\subsection{Case: $0<\rho\leq 1$}
 Due to the theory of atom decomposition, it suffices to show $\|T_{\phi,a} b\|_1\lesssim 1$ for any $L^2$-atom $b$ for $h^1(\mathbb{R}^n)$ which is defined in Section 2.

 \subsubsection{Case: $r\geq 1$}

 Let $\widetilde{B}_{r}$ be the one defined in section 2. It is easy to see that $m_1=\rho+(n-1)\rho_0-n<\frac n2(\rho-1)$ when $0<\rho\leq 1,n\geq 2$ or $0<\rho<1,n=1$.
 Therefore, we can use the $L^2$ boundedness of $T_{\phi,a}$ \cite[Theorem 2.2]{FS14} and (\ref{fiohp2.3}) to get
 \begin{align}
 \|T_{\phi,a} b\|_{L^1(\widetilde{B}_{r})}\lesssim |\widetilde{B}_{r}|^{\frac 12}\|T_{\phi,a}b\|_2\lesssim r^{\frac n2}\|b\|_2\lesssim 1.\label{fiohp3.1}
 \end{align}
 By Theorem C we have
 \begin{align}
 \|T_{\phi,a_0} b\|_1\lesssim \|b\|_{h^1}\lesssim 1.\label{fiohp3.2}
 \end{align}

 Take an integer $N>\frac {n+1}{2}$. By integration by parts, when $j>0$ we get
 \begin{align*}
 &|T_{\phi,a_j} b(x)|=\left|\int_{\mathbb{R}^{n}}\int_{|y|<r}e^{i[\phi(x,\xi)-y\cdot \xi]}a_j(x,\xi) b(y)d\xi dy\right|\\
 \leq&\sum^{J}_{\nu=1}\left|\int_{|y|<r}\left(\int_{\mathbb{R}^{n}}e^{i[\nabla_{\xi}\phi(x,\xi^{\nu}_j)-y]\cdot \xi}e^{ih_j^{\nu}(x,\xi)} a_{j}(x,\xi)\psi^{\nu}_{j}(\xi)d\xi\right) b(y)dy\right|\\
 \lesssim &\sum^{J}_{\nu=1}\int_{|y|<r}|\nabla_{\xi}\phi(x,\xi^{\nu}_j)-y|^{-2N}\int_{\mathbb{R}^{n}}\left|\nabla^{2N}_{\xi}[e^{ih_j^{\nu}(x,\xi)}a_j(x,\xi)\psi^{\nu}_{j}(\xi)]\right|d\xi |b(y)|dy.
 \end{align*}

 Thus, when $x\notin \widetilde{B}_{r},j>0$, by Lemma 2.3 we get that
 \begin{align}
 |T_{\phi,a_j} b(x)|\lesssim &\sum^{J}_{\nu=1}\int_{|y|<r}|\nabla_{\xi}\phi(x,\xi^{\nu}_j)|^{-2N}\int_{A^{\nu}_{j}}\left|\nabla^{2N}_{\xi}[e^{ih_j^{\nu}(x,\xi)}a_j(x,\xi)\psi^{\nu}_{j}(\xi)]\right|d\xi |b(y)|dy\nonumber\\
  \lesssim &\sum^{J}_{\nu=1}|\nabla_{\xi}\phi(x,\xi^{\nu}_j)|^{-2N}2^{j(m_1-2N\rho_0)}|A^{\nu}_{j}|\|b\|_1\nonumber\\
 \lesssim &2^{j(\rho-2N\rho_0)}\sum^{J}_{\nu=1}|\nabla_{\xi}\phi(x,\xi^{\nu}_j)|^{-2N}.\label{fiohp3.3}
 \end{align}
 Thank to $0<\rho\leq 2\rho_0, r\geq1, 2N>n+1$, it is easy to see that $\rho+(n-1)\rho_0-2N\rho_0\leq (n+1-2N)\rho_0<0$. So, by (\ref{fiohp3.1}), (\ref{fiohp3.2}),
 (\ref{fiohp3.3}) and (\ref{fiohp2.1}), we can get that
 \begin{align*}
 \|T_{\phi,a} b\|_1\leq  &\|T_{\phi,a} b\|_{L^1(\widetilde{B}_{r})}+\|T_{\phi,a_0} b\|_{L^1(\widetilde{B}^c_{r})}+\sum^{\infty}_{j=1}\|T_{\phi,a_j} b\|_{L^1(\widetilde{B}^c_{r})}\\
 \lesssim &1+\sum^{\infty}_{j=1}2^{j(\rho-2N\rho_0)}\sum^{J}_{\nu=1}\int_{|\nabla_{\xi}\phi(x,\xi^{\nu}_j)|>3r}|\nabla_{\xi}\phi(x,\xi^{\nu}_j)|^{-2N}dx\\
 \lesssim &1+\sum^{\infty}_{j=1}2^{j(\rho+(n-1)\rho_0-2N\rho_0)}r^{n-2N}\lesssim 1.
 \end{align*}
 Therefore, we proved that $\|T_{\phi,a}b\|_1\lesssim 1$ if $r\geq1$.

 \subsubsection{Case: $r<1$ and $2^{j\rho}r<1$}

 We first notice two basic estimates for the atom $b$ when $r<1$. For any $N\in \mathbb{N}$, there holds
 \begin{align}
 |\nabla^N\widehat{b}(\xi)|\leq \int_{|y|\leq r}|y|^N|b(y)|\,\textrm{d}y\lesssim r^N.\label{fiohp3.4}
 \end{align}
 Also, \cite[Section III.5.4]{S93} gives
 \begin{align}
 \int_{\mathbb{R}^n}|\widehat{b}(\xi)||\xi|^{-n}\,\textrm{d}\xi\lesssim \|b\|_{H^1}\lesssim 1.\label{fiohp3.5}
 \end{align}

 Set
 $$L=1+2^{2j\rho}\partial^2_{\xi^{\nu}_{j}}+2^{2j\rho_0}\sum\limits^n_{k=1}\partial^2_{\xi_k}.$$
 It is to see that $L$ is self-adjoint. Then by (\ref{fiohp3.4}) and Lemma 2.3, when $2^{j\rho}r<1$, for any $N\in \mathbb{N}$ we can show that
 \begin{align*}
 &|T^{\nu}_{j}b(x)|=|\int_{\mathbb{R}^{n}}e^{i\nabla_{\xi}\phi(x,\xi^{\nu}_j)\cdot \xi}e^{ih_j^{\nu}(x,\xi)} a_j(x,\xi)\psi^{\nu}_{j}(\xi)\widehat{b}(\xi)d\xi|\\
 =&(1+2^{2j\rho}|\partial_{\xi^{\nu}_{j}}\phi(x,\xi^{\nu}_j)|^2+2^{2j\rho_0}|\nabla_{\xi}\phi(x,\xi^{\nu}_j)|^2)^{-N}\\
 &|\int_{\mathbb{R}^{n}}L^N(e^{i\nabla_{\xi}\phi(x,\xi^{\nu}_j)\cdot \xi})e^{ih_j^{\nu}(x,\xi)} a_j(x,\xi)\psi^{\nu}_{j}(\xi)\widehat{b}(\xi)d\xi|\\
 \lesssim &(1+2^{2j\rho}|\partial_{\xi^{\nu}_{j}}\phi(x,\xi^{\nu}_j)|^2+2^{2j\rho_0}|\nabla_{\xi}\phi(x,\xi^{\nu}_j)|^2)^{-N}
 \int_{\mathbb{R}^{n}}|L^N(e^{ih_j^{\nu}} a_j\psi^{\nu}_{j}\widehat{b})|d\xi\\
 \lesssim &(1+2^{j\rho}|\partial_{\xi^{\nu}_{j}}\phi(x,\xi^{\nu}_j)|+2^{j\rho_0}|\nabla_{\xi}\phi(x,\xi^{\nu}_j)|)^{-2N}\\
 &\sum_{k_1+k_2+k_3+k_4\leq 2N}\int_{\mathbb{R}^{n}}2^{jk_1\rho}2^{jk_2\rho_0}|\partial^{k_1}_{\xi^{\nu}_{j}}\nabla^{k_2}_{\xi}(e^{ih_j^{\nu}}a_j\psi^{\nu}_{j})|
 2^{jk_3\rho}2^{jk_4\rho_0}|\nabla^{k_3+k_4}_{\xi}\widehat{b}|d\xi\\
 \lesssim &(1+2^{j\rho}|\partial_{\xi^{\nu}_{j}}\phi(x,\xi^{\nu}_j)|+2^{j\rho_0}|\nabla_{\xi}\phi(x,\xi^{\nu}_j)|)^{-2N}
 \sum_{k_3+k_4\leq 2N}\int_{A^{\nu}_{j}}2^{jm_1}2^{j(k_3+k_4)\rho}|\nabla^{k_3+k_4}_{\xi}\widehat{b}|d\xi\\
 \lesssim &2^{jm_1}(1+2^{j\rho}|\partial_{\xi^{\nu}_{j}}\phi(x,\xi^{\nu}_j)|+2^{j\rho_0}|\nabla_{\xi}\phi(x,\xi^{\nu}_j)|)^{-2N}
 (\int_{A^{\nu}_{j}}|\widehat{b}(\xi)|d\xi+2^{j\rho}r|A^{\nu}_{j}|).
 \end{align*}
 Thus, for an integer $N>n/2$, from (\ref{fiohp2.1}) and (\ref{fiohp3.5}) we have
 \begin{align}
 &\sum_{2^{j\rho}r<1}\int_{\mathbb{R}^n}|T_{\phi,a_j} b(x)|dx\leq \sum_{2^{j\rho}r<1}\sum^{J}_{\nu=1}\int_{\mathbb{R}^n}|T^{\nu}_{j}b(x)|dx\nonumber\\
 \lesssim &\sum_{2^{j\rho}r<1}2^{jm_1}\sum^{J}_{\nu=1}(\int_{A^{\nu}_{j}}|\widehat{b}(\xi)|d\xi+2^{j\rho}r|A^{\nu}_{j}|)
 \int_{\mathbb{R}^{n}}(1+2^{j\rho}|\partial_{\xi^{\nu}_{j}}\phi(x,\xi^{\nu}_j)|+2^{j\rho_0}|\nabla_{\xi}\phi(x,\xi^{\nu}_j)|)^{-2N}dx\nonumber\\
 \lesssim &\sum_{2^{j\rho}r<1}\sum^{J}_{\nu=1}2^{-jn}(\int_{A^{\nu}_{j}}|\widehat{b}(\xi)|d\xi+2^{j\rho}r|A^{\nu}_{j}|)\nonumber\\
 \lesssim &\int_{\mathbb{R}^{n}}|\widehat{b}(\xi)||\xi|^{-n}d\xi+\sum_{2^{j\rho}r<1}2^{j\rho}r\lesssim 1.\label{fiohp3.6}
 \end{align}
 This finishes the proof.

 \subsubsection{Case: $r<1$ and $2^{j\rho}r\geq 1$}

 When $r<1$, let $P_r$ be the "except set" defined in section 2. By Lemma 2.5 we get
 \begin{align}
 \|T_{\phi,a_j} b\|_2\lesssim 2^{j(m_1-\frac{n(\rho-1)}{2})}\|b\|_2\lesssim 2^{j(m_1-\frac{n(\rho-1)}{2})}r^{-\frac n2}.\label{fiohp3.7}
 \end{align}

  When $0<\rho\leq 1,n\geq 2$ or $0<\rho<1,n=1$, it is easy to check that $m_1-\frac{n(\rho-1)}{2}<0$ and
 $n-\rho-2(n-1)\rho_0\geq 0$. Thus, from (\ref{fiohp2.4}) and (\ref{fiohp3.7}) we can show that
 \begin{align}
 \sum_{2^{j\rho}r\geq 1}\|T_{\phi,a_j} b\|_{L^1(P_r)}&\leq \sum_{2^{j\rho}r\geq 1}|P_r|^{\frac 12}\|T_{\phi,a_j} b\|_2\nonumber\\
 \lesssim &\sum_{2^{j\rho}r\geq 1}2^{j(m_1-\frac{n(\rho-1)}{2})}r^{\frac 12-\frac n2}\nonumber\\
 \lesssim &r^{-\frac{m_1}{\rho}+\frac{n(\rho-1)}{2\rho}+\frac 12-\frac n2}\nonumber\\
 \lesssim &r^{\frac{n-\rho-2(n-1)\rho_0}{2\rho}}\lesssim 1.\label{fiohp3.8}
 \end{align}

 On the other hand, let $L$ be the operator defined in section 3.2.2. For any $N\in \mathbb{N}$ and $x\notin P_r$, by using (\ref{fiohp2.5}) and Lemma 2.3, we can deduce that
 \begin{align*}
 |T^{\nu}_{j} b(x)|=&|\int_{|y|<r}\int_{\mathbb{R}^{n}}e^{i(\nabla_{\xi}\phi(x,\xi^{\nu}_j)-y)\cdot \xi}e^{ih_j^{\nu}(x,\xi)} a_j(x,\xi)\psi^{\nu}_{j}(\xi)d\xi b(y)dy|\\
 = &|\int_{|y|<r}(1+2^{2j\rho}|\xi^{\nu}_j\cdot (\nabla_{\xi}\phi(x,\xi^{\nu}_j)-y)|^2+2^{2j\rho_0}|\nabla_{\xi}\phi(x,\xi^{\nu}_j)-y|^2)^{-N}\\
 &\int_{\mathbb{R}^{n}}L^N(e^{i(\nabla_{\xi}\phi(x,\xi^{\nu}_j)-y)\cdot \xi})e^{ih_j^{\nu}(x,\xi)} a_j(x,\xi)\psi^{\nu}_{j}(\xi)d\xi b(y)dy|\\
 \lesssim &\int_{|y|<r}(1+2^{j\rho}|\xi^{\nu}_j\cdot (\nabla_{\xi}\phi(x,\xi^{\nu}_j)-y)|+2^{j\rho_0}|\nabla_{\xi}\phi(x,\xi^{\nu}_j)-y|)^{-2N}|b(y)|dy\\
 &\cdot \int_{\mathbb{R}^{n}}|L^N(e^{ih_j^{\nu}} a_j\psi^{\nu}_{j})|d\xi \\
 \lesssim &(1+2^{j\rho}|\xi^{\nu}_j\cdot \nabla_{\xi}\phi(x,\xi^{\nu}_j)|+2^{j\rho_0}|\nabla_{\xi}\phi(x,\xi^{\nu}_j)|)^{-2\lambda_{\rho}N}\|b\|_1\\
 &\cdot \sum_{k_1+k_2\leq 2N}\int_{A^{\nu}_j}2^{jk_1\rho}2^{jk_2\rho_0}|\partial^{k_1}_{\xi^{\nu}_{j}}\nabla^{k_2}_{\xi}(e^{ih_j^{\nu}}a_j\psi^{\nu}_{j})|d\xi\\
 \lesssim &2^{jm_1}|A^{\nu}_j|(1+2^{j\rho}|\xi^{\nu}_j\cdot \nabla_{\xi}\phi(x,\xi^{\nu}_j)|+2^{j\rho_0}|\nabla_{\xi}\phi(x,\xi^{\nu}_j)|)^{-2\lambda_{\rho}N}.
 \end{align*}

 When $\lambda_{\rho}N>n+1$ and $2^{j\rho}r>1$, by (\ref{fiohp2.1}), the integral of $T_{\phi,a_j} b$ on $P^c_r$ can be controlled by
 \begin{align*}
 &\|T_{\phi,a_j} b\|_{L^1(P^c_r)}\leq \sum^{J}_{\nu=1}\|T^{\nu}_{j} b\|_{L^1(P^c_r)}\\
 \lesssim &2^{jm_1}\sum^{J}_{\nu=1}|A^{\nu}_j|\int_{P^c_r}(1+2^{j\rho}|\xi^{\nu}_j\cdot \nabla_{\xi}\phi(x,\xi^{\nu}_j)|+2^{j\rho_0}|\nabla_{\xi}\phi(x,\xi^{\nu}_j)|)^{-2\lambda_{\rho}N}dx\\
 \lesssim &2^{j\rho}\sum^{J}_{\nu=1}\int_{\{z:|\xi^{\nu}_j\cdot z|>3r\}\cup \{z:|z|>3r^{\lambda_{\rho}}\}}(1+2^{j\rho}|\xi^{\nu}_j\cdot z|+2^{j\rho_0}|z|)^{-2\lambda_{\rho}N}dz\\
 \lesssim &2^{j\rho}\sum^{J}_{\nu=1}\left(\int_{|\xi^{\nu}_j\cdot z|>r}+\int_{|z-(\xi^{\nu}_j\cdot z)\xi^{\nu}_j|>r^{\lambda_{\rho}}}\right)
 (1+2^{j\rho}|\xi^{\nu}_j\cdot z|)^{-\lambda_{\rho}N}(1+2^{j\rho_0}|z-(\xi^{\nu}_j\cdot z)\xi^{\nu}_j|)^{-\lambda_{\rho}N}dz\\
 \lesssim &2^{j\rho}2^{j(n-1)\rho_0}2^{-j\rho}2^{-j(n-1)\rho_0}\left((2^{j\rho}r)^{1-\lambda_{\rho}N}+(2^{j\rho_0}r^{\lambda_{\rho}})^{n-1-\lambda_{\rho}N}\right)\\
 =&(2^{j\rho}r)^{1-\lambda_{\rho}N}+(2^{j\rho}r)^{\lambda_{\rho}(n-1-\lambda_{\rho}N)}\lesssim (2^{j\rho}r)^{-1}
 \end{align*}
 which yields that
 \begin{align}
 \sum_{2^{j\rho}r>1}\|T_{\phi,a_j} b\|_{L^1(P^c_r)}\lesssim \sum_{2^{j\rho}r>1}(2^{j\rho}r)^{-1}\lesssim 1.\label{fiohp3.9}
 \end{align}
 Let everything together, this finishes the proof of Theorem 1.3 when $p=1$.

\section{Proof of Theorem 1.3 when $p<1$}
 Similarly, it is enough for us to prove that $\|T_{\phi,a} b\|_p\lesssim 1$ for any $L^2$-atom $b$ for $h^p(\mathbb{R}^n)$ which is defined in section 2. When $0<p\leq 1$, we recall a basic inequality,
 $$\|\sum^{\infty}_{i=1} f_i\|^p_p=\int_{\mathbb{R}^n}|\sum^{\infty}_{i=1} f_i(x)|^pdx\leq \int_{\mathbb{R}^n}\sum^{\infty}_{i=1}|f_i(x)|^pdx=\sum^{\infty}_{i=1}\|f_i\|^p_p.$$

 \subsection{Case: $r\geq 1$}

 Let $\widetilde{B}_{r}$ be the one defined in section 2. When $0\leq \rho\leq 1,n\geq 2$ or $0\leq\rho<1,n=1$, it is easy to see that $m_p=\frac{\rho-n}{p}+(n-1)\rho_0<\frac n2(\rho-1).$
 Therefore, by using (\ref{fiohp2.3}) and the $L^2$ boundedness of $T_{\phi,a}$ \cite[Theorem 2.2]{FS14} we can get that
 \begin{align}
 \left\|T_{\phi,a} b\right\|_{L^p(\widetilde{B}_{r})}\lesssim |\widetilde{B}_{r}|^{\frac 1p-\frac 12}\|T_{\phi,a}b\|_2\lesssim r^{n(\frac 1p-\frac 12)}\|b\|_2\lesssim 1.\label{fiohp4.1}
 \end{align}

 Take an integer $N$ such that $2Np>n+1$. By integration by parts and Lemma 2.3, for any $j,\nu>0$ and $x\notin \widetilde{B}_{r}$, we obtain that
 \begin{align}
 \left|T^{\nu}_{j} b(x)\right|=&\left|\int_{|y|<r}\int_{\mathbb{R}^{n}}e^{i[\nabla_{\xi}\phi(x,\xi^{\nu}_j)-y]\cdot \xi}e^{ih_j^{\nu}(x,\xi)} a_j(x,\xi)\psi^{\nu}_{j}(\xi)b(y)d\xi dy\right|\nonumber\\
 \lesssim &\int_{|y|<r}|\nabla_{\xi}\phi(x,\xi^{\nu}_j)-y|^{-2N}\int_{\mathbb{R}^{n}}\left|\nabla^{2N}_{\xi}(e^{ih_j^{\nu}}a_j\psi^{\nu}_{j})\right|d\xi |b(y)|dy\nonumber\\
 \lesssim &|\nabla_{\xi}\phi(x,\xi^{\nu}_j)|^{-2N}\int_{|y|<r}\int_{A^{\nu}_{j}}2^{j(m_p-2N\rho_0)}d\xi |b(y)|dy\nonumber\\
 \lesssim &2^{j(m_p-2N\rho_0)}r^{n(1-\frac 1p)}|A^{\nu}_{j}||\nabla_{\xi}\phi(x,\xi^{\nu}_j)|^{-2N}.\label{fiohp4.2}
 \end{align}
 It is easy to check that $\rho-(2Np-n+1)\rho_0-n(1-p)\leq -n(1-p)<0$. So, by (\ref{fiohp4.1}), (\ref{fiohp4.2}) and (\ref{fiohp2.1}), we can show that
 \begin{align}
 &\|T_{\phi,a} b\|^p_p\nonumber\\
 \leq &\|T_{\phi,a} b\|^p_{L^p(\widetilde{B}_{r})}+\|T_{\phi,a_0} b\|^p_{L^p(\widetilde{B}^c_{r})}+\|\sum^{\infty}_{j=1}\sum^{J}_{\nu=1}T^{\nu}_{j} b\|^p_{L^p(\widetilde{B}^c_{r})}\nonumber\\
 \lesssim &1+\|T_{\phi,a_0} b\|^p_p+\sum^{\infty}_{j=1}\sum^{J}_{\nu=1}\|T^{\nu}_{j} b\|^p_{L^p(\widetilde{B}^c_{r})}\nonumber\\
 \lesssim &1+\|T_{\phi,a_0} b\|^p_p+\sum^{\infty}_{j=1}\sum^{J}_{\nu=1}2^{j(m_p-2N\rho_0)p}r^{n(p-1)}|A^{\nu}_{j}|^p\int_{|\nabla_{\xi}\phi(x,\xi^{\nu}_j)|>3r}|\nabla_{\xi}\phi(x,\xi^{\nu}_j)|^{-2Np}dx\nonumber\\
 \lesssim &1+\|T_{\phi,a_0} b\|^p_p+\sum^{\infty}_{j=1}2^{j(n-1)\rho_0}2^{j(\rho-n+(n-1-2N)\rho_0p)}2^{j(n-(n-1)\rho_0)p}r^{n-2Np}\nonumber\\
 =&1+\|T_{\phi,a_0} b\|^p_p+\sum^{\infty}_{j=1}2^{j(\rho-(2Np-n+1)\rho_0-n(1-p))}r^{n-2Np}\lesssim 1+\|T_{\phi,a_0} b\|^p_p.\label{fiohp4.3}
 \end{align}
 Therefore, by (\ref{fiohp4.3}) and Theorem C, we finish the proof of Theorem 1.3 when $p<1$ and $r\geq1$. It is worth noting that $p<1$ is necessary in the proof of (\ref{fiohp4.3}).

 \subsection{Case $r<1$}

 Let $j_0$ be the unique positive integer such that $1<2^{j_0}r\leq 2$ and $P_r$ be the "except set" defined in section 2. We divide $\|\sum\limits^{\infty}_{j=1}T_{\phi,a_j} b\|^p_{p}$ into three parts,
 \begin{align}
 \|\sum^{\infty}_{j=1}T_{\phi,a_j}b\|^p_{p}&\leq\sum^{j_0}_{j=1}\sum^{J}_{\nu=1}\|T^{\nu}_{j}b\|^p_{p}
 +\|\sum^{\infty}_{j=j_0+1}T_{\phi,a_j}b\|^p_{L^p(P_r)}+\sum^{\infty}_{j=j_0+1}\sum^{J}_{\nu=1}\|T^{\nu}_{j}b\|^p_{L^p(P_r^c)}\nonumber\\
 &:=I+II+III.\label{fiohp4.4}
 \end{align}

 Take integers $N>\frac np$. Let $N_p$ be the least integer which is greater than $n(\frac 1p-1)$ and $L$ be the operator defined in section 3.2.2.

 When $r|\xi|\leq 1$ and $0\leq |\alpha|<N_p$, by Taylor's formula and the mean value zero of $b$, we have
 \begin{align*}
 |\partial^{\alpha}_{\xi}\widehat{b}(\xi)|&=|\int_{|y|<r}e^{iy\cdot \xi}y^{\alpha}b(y)dy|\\
 &\lesssim |\int_{|y|<r}\sum_{|\beta|<N_p-|\alpha|}\frac{(iy\cdot \xi)^{\beta}}{\beta!}y^{\alpha}b(y)dy|+\int_{|y|<r}|r\xi|^{N_p-|\alpha|}|y|^{|\alpha|}|b(y)|dy\\
 &\lesssim |\xi|^{N_p-|\alpha|}r^{N_p-n(\frac 1p-1)}.
 \end{align*}

 On the other hand, when $r|\xi|\leq 1$ and $|\alpha|\geq N_p$, we also have
 \begin{align*}
 |\partial^{\alpha}_{\xi}\widehat{b}(\xi)|=|\int_{|y|<r}e^{iy\cdot \xi}y^{\alpha}b(y)dy|\lesssim r^{|\alpha|-n(\frac 1p-1)}\lesssim |\xi|^{N_p-|\alpha|}r^{N_p-n(\frac 1p-1)}.
 \end{align*}
 Therefore, when $r|\xi|\leq 1$, for any multi-index $\alpha$, we get that
 \begin{align}
 |\partial^{\alpha}_{\xi}\widehat{b}(\xi)|\lesssim |\xi|^{N_p-|\alpha|}r^{N_p-n(\frac 1p-1)}.\label{fiohp4.5}
 \end{align}

 Then, by using (\ref{fiohp4.5}) and Lemma 2.3, we can show that
 \begin{align*}
 |T^{\nu}_{j}b(x)|=&|\int_{\mathbb{R}^{n}}e^{i\nabla_{\xi}\phi(x,\xi^{\nu}_j)\cdot \xi}e^{ih_j^{\nu}(x,\xi)}a_j(x,\xi)\psi^{\nu}_{j}(\xi)\widehat{b}(\xi)d\xi|\\
 =&(1+2^{2j\rho}|\partial_{\xi^{\nu}_{j}}\phi(x,\xi^{\nu}_j)|^2+2^{2j\rho_0}|\nabla_{\xi}\phi(x,\xi^{\nu}_j)|^2)^{-N}\\
 &|\int_{\mathbb{R}^{n}}L^N(e^{i\nabla_{\xi}\phi(x,\xi^{\nu}_j)\cdot \xi})e^{ih_j^{\nu}(x,\xi)}a_j(x,\xi)\psi^{\nu}_{j}(\xi)\widehat{b}(\xi)d\xi|\\
 \lesssim &(1+2^{j\rho}|\partial_{\xi^{\nu}_{j}}\phi(x,\xi^{\nu}_j)|+2^{j\rho_0}|\nabla_{\xi}\phi(x,\xi^{\nu}_j)|)^{-2N}\int_{\mathbb{R}^{n}}|L^N(e^{ih_j^{\nu}}a_j\psi^{\nu}_{j}\widehat{b})|d\xi\\
 \lesssim&(1+2^{j\rho}|\partial_{\xi^{\nu}_{j}}\phi(x,\xi^{\nu}_j)|+2^{j\rho_0}|\nabla_{\xi}\phi(x,\xi^{\nu}_j)|)^{-2N}\\
 &\sum_{k_1+k_2+k_3\leq 2N}\int_{A^{\nu}_{j}}2^{jk_1\rho}2^{jk_2\rho_0}|\partial^{k_1}_{\xi^{\nu}_{j}}\nabla^{k_2}_{\xi}(e^{ih_j^{\nu}}a_j\psi^{\nu}_{j})|2^{jk_3\rho}|\nabla^{k_3}_{\xi}\widehat{b}(\xi)|d\xi\\
 \lesssim &2^{j(m_p+N_p)}|A^{\nu}_{j}|r^{N_p-n(\frac 1p-1)}(1+2^{j\rho}|\partial_{\xi^{\nu}_{j}}\phi(x,\xi^{\nu}_j)|+2^{j\rho_0}|\nabla_{\xi}\phi(x,\xi^{\nu}_j)|)^{-2N}.
 \end{align*}

 As $N>\frac {2n}{p}$, $N_p>n(\frac 1p-1)$ and $2^{j_0}r\leq 2$, we obtain that
 \begin{align}
 I=&\sum^{j_0}_{j=1}\sum^{J}_{\nu=1}\|T^{\nu}_{j}b\|^p_{p}\nonumber\\
 \lesssim & \sum^{j_0}_{j=1}\sum^{J}_{\nu=1}\int_{\mathbb{R}^{n}}[2^{j(m_p+N_p)}|A^{\nu}_{j}|r^{N_p-n(\frac 1p-1)}
 (1+2^{j\rho}|\partial_{\xi^{\nu}_{j}}\phi(x,\xi^{\nu}_j)|+2^{j\rho_0}|\nabla_{\xi}\phi(x,\xi^{\nu}_j)|)^{-2N}]^pdx\nonumber\\
 \lesssim &\sum^{j_0}_{j=1}\sum^{J}_{\nu=1}2^{j(m_p+N_p)p}|A^{\nu}_{j}|^pr^{N_pp-n(1-p)}2^{-j(\rho+(n-1)\rho_0)}\nonumber\\
 \lesssim &\sum^{j_0}_{j=1}2^{j(n-1)\rho_0}2^{j(\rho-n+(n-1)\rho_0p+N_pp)}2^{j(n-(n-1)\rho_0)p}2^{-j(\rho+(n-1)\rho_0)}r^{N_pp-n(1-p)}\nonumber\\
 = &\sum^{j_0}_{j=1} (2^jr)^{[N_p-n(\frac 1p-1)]p}\lesssim 1.\label{fiohp4.6}
 \end{align}

 For the second term II, take $s,t$ such that $s=(n-\rho)(\frac 1p-1)$ and $\frac 1t=1+\frac s n$. It is easy to see that $p\leq t\leq 1$.
 From the definition of the $L^2$ atom for the Hardy space, one can verify that $r^{n(\frac 1p-\frac 1t)}b$ is a $L^2$-atom for $H^t(\mathbb{R}^n)$. Hence, we obtain
 $$\|b\|_{H^t}\leq r^{n(\frac 1t-\frac 1p)}=r^{-\rho(\frac 1p-1)}.$$
 By Lemma 2.6 we get that
 \begin{align}
 \|I_{s}b\|_{H^1}\lesssim \|b\|_{H^t}\lesssim r^{-\rho(\frac 1p-1)}.\label{fiohp4.7}
 \end{align}
 Set $\widetilde{a}_{j_0}(x,\xi)=a(x,\xi)(1-\Psi_0(2^{-j_0}\xi))|\xi|^{s}$. It is easy to check that $\widetilde{a}_{j_0}\in L^{\infty}S^{\rho+(n-1)\rho_0-n}_{\rho}$ and $T_{\phi,\widetilde{a}_{j_0}}$
 is bounded from $H^1$ to $L^1$ (proved in Section 3). Therefore, by (\ref{fiohp2.4}) and (\ref{fiohp4.7}), we get that
 \begin{align}
 II&=\|\sum^{\infty}_{j=j_0+1}T_{\phi,a_j}b\|_{L^p(P_r)}=\|T_{\phi,\widetilde{a}_{j_0}}I_{s}b\|_{L^p(P_r)}\nonumber\\
 \leq &|P_r|^{\frac 1p-1}\|T_{\phi,\widetilde{a}_{j_0}}I_{s}b\|_1\lesssim r^{\frac 1p-1}\|I_{s}b\|_{H^1}\nonumber\\
 \lesssim &r^{(\frac 1p-1)(1-\rho)}\lesssim 1.\label{fiohp4.8}
 \end{align}

 From (\ref{fiohp2.5}) and Lemma 2.3, when $x\in P_r^c$ we can show that
 \begin{align*}
 &|T^{\nu}_{j}b(x)|=|\int_{|y|<r}\int_{\mathbb{R}^{n}}e^{i(\nabla_{\xi}\phi(x,\xi^{\nu}_j)-y)\cdot \xi}e^{ih_j^{\nu}(x,\xi)}a_j(x,\xi)\psi^{\nu}_{j}(\xi)d\xi b(y)dy|\\
 =&|\int_{|y|<r}(1+2^{2j\rho}|\xi^{\nu}_j\cdot(\nabla_{\xi}\phi(x,\xi^{\nu}_j)-y)|^2+2^{2j\rho_0}|\nabla_{\xi}\phi(x,\xi^{\nu}_j)-y|^2)^{-N}\\
 &\int_{\mathbb{R}^{n}}L^N(e^{i(\nabla_{\xi}\phi(x,\xi^{\nu}_j)-y)\cdot \xi})e^{ih_j^{\nu}(x,\xi)}a_j(x,\xi)\psi^{\nu}_{j}(\xi)d\xi b(y)dy|\\
 \lesssim &\int_{|y|<r}(1+2^{j\rho}|\xi^{\nu}_j\cdot(\nabla_{\xi}\phi(x,\xi^{\nu}_j)-y)|+2^{j\rho_0}|\nabla_{\xi}\phi(x,\xi^{\nu}_j)-y|)^{-2N}\\
 &\sum_{k_1+k_2\leq 2N}\int_{A^{\nu}_{j}}2^{jk_1\rho}2^{jk_2\rho_0}|\partial^{k_1}_{\xi^{\nu}_{j}}\nabla^{k_2}_{\xi}(e^{ih_j^{\nu}}a_j\psi^{\nu}_{j})|d\xi |b(y)|dy\\
 \lesssim &2^{jm_p}|A^{\nu}_{j}|r^{n(1-\frac 1p)}(1+2^{j\rho}|\partial_{\xi^{\nu}_{j}}\phi(x,\xi^{\nu}_j)|+2^{j\rho_0}|\nabla_{\xi}\phi(x,\xi^{\nu}_j)|)^{-2\lambda_{\rho}N}.
 \end{align*}

  As $p<1$ and $2\lambda_{\rho}Np\geq Np>n$, by (\ref{fiohp2.1}) we get that
 \begin{align}
 III=&\sum^{\infty}_{j=j_0+1}\sum^{J}_{\nu=1}\|T^{\nu}_{j}b\|^p_{L^p(P^c_r)}\nonumber\\
 \lesssim &\sum^{\infty}_{j=j_0+1}\sum^{J}_{\nu=1}2^{jm_pp}|A^{\nu}_{j}|^pr^{n(p-1)}
 \int_{\mathbb{R}^{n}}(1+2^{j\rho}|\partial_{\xi^{\nu}_{j}}\phi(x,\xi^{\nu}_j)|+2^{j\rho_0}|\nabla_{\xi}\phi(x,\xi^{\nu}_j)|)^{-2\lambda_{\rho}Np}dx\nonumber\\
 \lesssim &\sum^{\infty}_{j=j_0+1}\sum^{J}_{\nu=1}2^{j(\rho-n+(n-1)\rho_0p)}2^{j(n-(n-1)\rho_0)p}r^{n(p-1)}\int_{\mathbb{R}^{n}}(1+2^{j\rho}|\xi^{\nu}_j\cdot z|+2^{j\rho_0}|z|)^{-2\lambda_{\rho}Np}dz\nonumber\\
 \lesssim &\sum^{\infty}_{j=j_0+1}(2^{j}r)^{n(p-1)}\lesssim 1.\label{fiohp4.9}
 \end{align}
 In the proof of (\ref{fiohp4.9}), $p<1$ is necessary. Now, by (\ref{fiohp4.4}), (\ref{fiohp4.6}), (\ref{fiohp4.8}) and (\ref{fiohp4.9}) we get
 \begin{align}
 \|\sum^{\infty}_{j=1}T_{\phi,a}b\|^p_{p}\leq I+II+III\lesssim 1.\label{fiohp4.10}
 \end{align}
 Therefore, by (\ref{fiohp4.10}) and Theorem C, we finish the proof of Theorem 1.3 when $p<1$ and $r<1$.

 \section{Appendix A: proof of Lemma 2.1}

 \hspace{4mm}  Thank to \cite[Theorem 1.2]{GZ22} and the theory of atom decomposition for $h^1$, it is left for us to show $\|T_{a} b\|_1\lesssim 1$ if $b$ is supported in $B_r$ with $r\geq 1$   and $\|b\|_2\leq r^{-n/2}$.

 We first use the $L^2$ boundedness of $T_a$ (ensured by \cite[Proposition 2.3]{KS07}) to get
 \begin{align*}
 \|T_a b\|_{L^1(B_{2r})}\lesssim r^{n/2}\|T_ab\|_2\lesssim r^{n/2}\|b\|_2\lesssim 1.
 \end{align*}
 By integration by parts, when $x\neq 0$ we obtain that
 \begin{align*}
 |T_a b(x)|\lesssim &|x|^{-n}\sum_{|\alpha|+|\beta|=n}\left|\int_{\mathbb{R}^n} e^{ix\cdot\xi}\partial^{\alpha}_{\xi}a(x,\xi)\partial^{\beta}_{\xi}\widehat{b}(\xi)d\xi\right|\\
 \lesssim &|x|^{-n}\sum_{|\alpha|+|\beta|=n}\left|\int_{\mathbb{R}^n} e^{ix\cdot\xi}\partial^{\alpha}_{\xi}a(x,\xi)\widehat{y^{\beta} b}(\xi)d\xi\right|\\
 =&|x|^{-n}\sum_{|\alpha|+|\beta|=n}|T_{a_{\alpha}}(y^{\beta} b)(x)|,
 \end{align*}
 where $T_{a_\alpha}$ is the pseudo-differential operator with the amplitude $a_{\alpha}=\partial^{\alpha}_{\xi}a$. It is obvious that $a_{\alpha}\in L^\infty S^{-n}_0$. Thus $T_{a_{\alpha}}$ is bounded on $L^2$ and
 \begin{align*}
 \int_{|x|\geq 2r} |T_a b(x)|dx
 \lesssim &\sum_{|\alpha|+|\beta|=n}\int_{|x|\geq 2r} |x|^{-n}|T_{a_{\alpha}}(y^{\beta} b)(x)|dx\\
 \lesssim &\sum_{|\alpha|+|\beta|=n}r^{-n/2}\|y^{\beta}b(y)\|_{L^2(dy)} \\
 \lesssim &\sum_{|\beta|\leq n}r^{-n+|\beta|}\lesssim  1.
 \end{align*}
 Therefore, we get that $\|T_a b\|_1\lesssim 1$ if $r\geq1$ and this finishes the proof of Lemma 2.1.

 \section{Appendix B: proof of Lemma 2.2}

 \hspace{4mm}  When $j>0$ and $\xi\in A^{\nu}_{j}$, it is easy to see that
 $$|\xi|\approx \xi^{\nu}_{j}\cdot \xi\approx2^{j}\quad \textrm{and}\quad |\xi-(\xi^{\nu}_{j}\cdot \xi)\xi^{\nu}_{j}|\lesssim 2^{j(1-\rho_0)}.$$
 For any $\lambda>0$, since $\phi$ is positively homogeneous of degree 1 in $\xi$, one can verify that $\phi(x,\xi)=\xi\cdot \nabla_{\xi}\phi(x,\xi)$ and then
 \begin{align*}
 &h^{\nu}_{j}(x,\lambda\xi^{\nu}_{j})=\phi(x,\lambda\xi^{\nu}_{j})-\lambda\xi^{\nu}_{j}\cdot \nabla_{\xi}\phi(x,\xi^{\nu}_{j})=\lambda\phi(x,\xi^{\nu}_{j})-\lambda\phi(x,\xi^{\nu}_{j})=0,\\
 &(\nabla_{\xi}h^{\nu}_{j})(x,\lambda\xi^{\nu}_{j})=(\nabla_{\xi}\phi)(x,\lambda\xi^{\nu}_{j})-\nabla_{\xi}\phi(x,\xi^{\nu}_{j})=\nabla_{\xi}\phi(x,\xi^{\nu}_{j})-\nabla_{\xi}\phi(x,\xi^{\nu}_{j})=0.
 \end{align*}
 So for any $N\geq 0$ and $\lambda>0$, one can get that
 \begin{align*}
 \partial^{N}_{\xi^{\nu}_{j}}h^{\nu}_{j}(x,\lambda\xi^{\nu}_{j})=0,\partial^{N}_{\xi^{\nu}_{j}}\nabla_{\xi}h^{\nu}_{j}(x,\lambda\xi^{\nu}_{j})=0.
 \end{align*}
 When $M\geq2,N\geq 0$, by the positively homogeneity of $\phi$ and the fact $\rho_0\leq \frac 12$, we get
 \begin{align}
 |\partial^{N}_{\xi^{\nu}_{j}}\nabla^{M}_{\xi}h^{\nu}_{j}(x,\xi)|=|\partial^{N}_{\xi^{\nu}_{j}}\nabla^{M}_{\xi}\phi(x,\xi)|\lesssim 2^{j(1-N-M)}
 \lesssim 2^{-j(N+M\rho_0)}. \label{fiohp6.1}
 \end{align}
 When $M=1$, $N\geq 0$, by using $(\partial^{N}_{\xi^{\nu}_{j}}\nabla_{\xi}h^{\nu}_{j})(x,(\xi^{\nu}_{j}\cdot \xi)\xi^{\nu}_{j})=0$, the mean value theorem, the bound \eqref{fiohp6.1}
 and $|\xi-(\xi^{\nu}_{j}\cdot \xi)\xi^{\nu}_{j}|\lesssim 2^{j(1-\rho_0)}$ we have
 \begin{align*}
 |(\partial^{N}_{\xi^{\nu}_{j}}\nabla_{\xi}h^{\nu}_{j})(x,\xi)|&=
 |(\partial^{N}_{\xi^{\nu}_{j}}\nabla_{\xi}h^{\nu}_{j})(x,\xi)-(\partial^{N}_{\xi^{\nu}_{j}}\nabla_{\xi}h^{\nu}_{j})(x,(\xi^{\nu}_{j}\cdot \xi)\xi^{\nu}_{j})|\\
 &\lesssim 2^{-j(N+1)}2^{j(1-\rho_0)}=2^{-j(N+\rho_0)}.
 \end{align*}
 When $M=0$, $N\geq 1$, it is easy to check that $1-N-2\rho_0\leq -N\rho$. Therefore, applying similar arguments, we can conclude that
 \begin{align*}
 |\partial^{N}_{\xi^{\nu}_{j}}h^{\nu}_{j}(x,\xi)|
 &=|\partial^{N}_{\xi^{\nu}_{j}}h^{\nu}_{j}(x,\xi)-\partial^{N}_{\xi^{\nu}_{j}}h^{\nu}_{j}(x,(\xi^{\nu}_{j}\cdot \xi)\xi^{\nu}_{j})|\\
 &\lesssim 2^{-j(N+\rho_0)}2^{j(1-\rho_0)}\leq 2^{-jN\rho}.
 \end{align*}
 Therefore we get desired estimates for $h^{\nu}_{j}$.

 On the other hand, when $\xi\in A^{\nu}_{j}$ with $j>0$, we claim that
 \begin{align}
 \partial_{\xi^{\nu}_{j}}=\partial_{r}+O(2^{-j\rho_0})\cdot\nabla_{\xi},   \label{fiohp6.2}
 \end{align}
 where $\partial_r$ is the radial derivative. Indeed, by using the polar coordinates $\xi=r\theta$ with $r=|\xi|$, we get
 \begin{align*}
 &0\leq |\xi|-\xi^{\nu}_{j}\cdot \xi=\frac{|\xi-(\xi^{\nu}_{j}\cdot \xi)\xi^{\nu}_{j}|^2}{|\xi|+\xi^{\nu}_{j}\cdot \xi}\lesssim 2^{j(1-2\rho_0)},\\
 &\partial_{r}=O(1)\cdot\nabla_{\xi}, \qquad  \partial_{\theta}=O(2^{j})\cdot\nabla_{\xi},
 \end{align*}
 and
 \begin{equation*}
 |\frac{\partial \theta}{\partial \xi^{\nu}_{j}}|=\frac{\left||\xi|\frac{\partial \xi}{\partial \xi^{\nu}_{j}}-\frac{\partial |\xi|}{\partial \xi^{\nu}_{j}}\xi\right|}{|\xi|^2}
 \leq \frac{|\xi-(\xi^{\nu}_{j}\cdot \xi)\xi^{\nu}_{j}|^2+|\xi^{\nu}_{j}\cdot \xi||\xi-(\xi^{\nu}_{j}\cdot \xi)\xi^{\nu}_{j}|}{|\xi|^3}\lesssim 2^{-j(1+\rho_0)}.
 \end{equation*}
 Thus when $\xi\in A^{\nu}_{j}$ with $j>0$, we can write
 \begin{align*}
 \partial_{\xi^{\nu}_{j}}&=\frac{\partial r}{\partial \xi^{\nu}_{j}}\partial_{r}+\frac{\partial \theta}{\partial \xi^{\nu}_{j}}\partial_{\theta}
 =\frac{\xi^{\nu}_{j}\cdot \xi}{|\xi|}\partial_{r}+O\left(2^{-j(1+\rho_0)}\right) O(2^j)\cdot\nabla_{\xi}\nonumber\\
 &=\partial_r+\frac{\xi^{\nu}_{j}\cdot \xi-|\xi|}{|\xi|}\partial_{r}+O(2^{-j\rho_0})\cdot\nabla_{\xi}=\partial_{r}+O(2^{-j\rho_0})\cdot\nabla_{\xi}.
 \end{align*}
 So the claim is true.

 Since $\psi^{\nu}_{j}$ is homogeneous of degree 0, we have $\partial^{k}_{r}\psi^{\nu}_{j}=0$ for any $k>0$.
 Now \eqref{fiohp2.2} and \eqref{fiohp6.2} yield that
 \begin{align*}
 \left|\partial^{N}_{\xi^{\nu}_{j}}\nabla^{M}_{\xi}\psi^{\nu}_{j}(\xi)\right|
 &=\left|\left(\partial_{r}+O\left(2^{-j\rho_0}\right)\cdot\nabla_{\xi}\right)^{N}\nabla^{M}_{\xi}\psi^{\nu}_{j}(\xi)\right|\\
 &\lesssim 2^{-jN\rho_0}\left|\nabla^{N+M}_{\xi}\psi^{\nu}_{j}(\xi)\right|\\
 &\lesssim 2^{-jN\rho_0}2^{-j(N+M)}2^{j(N+M)\rho_0}\\
 &=2^{-j(N+(1-\rho_0)M)}\leq 2^{-j(N+\rho_0M)}
 \end{align*}
 as desired.

 \section{Appendix C: proof of Lemma 2.5}

 \hspace{4mm}  When $j=0$, one can see \cite[Theorem 1.18]{FS14}.

 When $j>0$, set
 $$S_{\phi,a_j}g(x)=\int_{\mathbb{R}^{n}}e^{i\phi(x,\xi)}a_j(x,\xi)g(\xi)d\xi.$$

 It is easy to see that $S_{\phi,a_j}S^*_{\phi,a_j}$ can be given by
 $$S_{\phi,a_j}S^*_{\phi,a_j}f(x)=\int_{\mathbb{R}^n} K_j(x,y)f(y)dy,$$
 where
 $$K_j(x,y)=\int_{\mathbb{R}^n}e^{i(\phi(x,\xi)-\phi(y,\xi))}a_j(x,\xi)\overline{a_j(y,\xi)}d\xi.$$

 Set $G(x,y,\xi)=\phi(x,\xi)-\phi(y,\xi)$.  Define the operator $L$ as
 $$L=\frac{\nabla_{\xi} G\cdot \nabla_{\xi}}{|\nabla_{\xi} G|^2}$$
 and let $L^*$ be the dual of $L$.

 As $\phi\in \Phi^2$, for any $k\geq 1$,  we have
 \begin{align}\label{fiohp7.1}
 |\nabla^k_{\xi} G(x,y,\xi)|\leq |x-y|\sup_{(z,\xi)\in\mathbb{R}^{n}\times(\mathbb{R}^{n}\setminus\{0\})}|\nabla_z\nabla^k_{\xi} \phi(z,\xi)|\lesssim |x-y||\xi|^{1-k}.
 \end{align}
 On the other hand, as $\phi\in \Phi^{2}$ satisfies the SND condition (\ref{fiohp1.3}), by using \cite[Proposition 1.11]{FS14}, one can get that
 \begin{align}\label{fiohp7.2}
 |\nabla_{\xi} G(x,y,\xi)|=|\nabla_{\xi}\phi(x,\xi)-\nabla_{\xi}\phi(y,\xi)|\geq c|x-y|,
 \end{align}
 where $c$ depends only on $\lambda$.

 One can easily see that
 $$L(e^{i(\phi(x,\xi)-\phi(y,\xi))})=\frac{\nabla_{\xi} G\cdot \nabla_{\xi}}{|\nabla_{\xi} G|^2}e^{iG}=ie^{iG}=ie^{i(\phi(x,\xi)-\phi(y,\xi))}.$$
 So, for any $M\in \mathbb{N}$, applying Lemma 3.2 in \cite{RZ23}, from (\ref{fiohp7.1}), (\ref{fiohp7.2}) and the fact $a\in L^{\infty}S^{m}_{\rho}$, we have
 \begin{align}
 &\left|K_j(x,y)\right|\nonumber\\
 =&\left|\int_{\mathbb{R}^n}e^{i(\phi(x,\xi)-\phi(y,\xi))}a_j(x,\xi)\overline{a_j(y,\xi)}d\xi\right|\nonumber\\
 =&\left|\int_{\mathbb{R}^n}(-i)^ML^M(e^{i(\phi(x,\xi)-\phi(y,\xi))})a_j(x,\xi)\overline{a_j(y,\xi)}d\xi\right|\nonumber\\
 =&\left|\int_{\mathbb{R}^n}e^{i(\phi(x,\xi)-\phi(y,\xi))}(L^*)^M(a_j(x,\xi)\overline{a_j(y,\xi)})d\xi\right|\nonumber\\
 \lesssim &\int_{\mathbb{R}^n}|\nabla_{\xi} G|^{-4M}\sum_{k_0+k_1+\cdots+k_{3M}=M,k_s\geq0}\left|\nabla_{\xi}^{k_0}(a_j(x,\xi)\overline{a_j(y,\xi)})\right|\prod^{3M}_{s=1}\left|\nabla_{\xi}^{k_s+1}G\right|d\xi\nonumber\\
 \lesssim &\int_{B_{2^{j+1}}\setminus B_{2^{j-1}}}|x-y|^{-4M}\sum_{k_0+k_1+\cdots+k_{3M}=M,k_s\geq 0}|\xi|^{2m-k_0\rho}\prod^{3M}_{s=1}\left(|x-y||\xi|^{-k_s}\right)d\xi\nonumber\\
 \lesssim &2^{jn}|x-y|^{-4M}\sum^M_{k_0=0}2^{j(2m-k_0\rho)}|x-y|^{3M}2^{j(k_0-M)}\nonumber\\
 \lesssim &2^{j(n+2m)}(2^{j\rho}|x-y|)^{-M}.\label{fiohp7.3}
 \end{align}

 For any $x$, letting $M=0$ or $n+1$ in (\ref{fiohp7.3}), one can obtain that
 \begin{align}
 &\int_{\mathbb{R}^n}|K_j(x,y)|dy\nonumber\\
 =&\int_{|y-x|<2^{-j\rho}}|K_j(x,y)|dy+\int_{|y-x|\geq 2^{-j\rho}}|K_j(x,y)|dy\nonumber\\
 \lesssim &\int_{|y-x|<2^{-j\rho}}2^{j(n+2m)}dy+\int_{|y-x|\geq 2^{-j\rho}}2^{j(n+2m)}(2^{j\rho}|x-y|)^{-n-1}dy\nonumber\\
 \lesssim & 2^{j(2m-n(\rho-1))}.\label{fiohp7.4}
 \end{align}
 Similarly, for any $y$, we get
 $$\int_{\mathbb{R}^n}|K_j(x,y)|dx\lesssim 2^{j(2m-n(\rho-1))}.$$
 Therefore,
 $$\|S_{\phi,a_j}S^*_{\phi,a_j}\|_{L^1-L^1}\lesssim 2^{j(2m-n(\rho-1))},\|S_{\phi,a_j}S^*_{\phi,a_j}\|_{L^{\infty}-L^{\infty}}\lesssim 2^{j(2m-n(\rho-1))},$$
 which implies that
 $$\|S_{\phi,a_j}S^*_{\phi,a_j}\|_{L^2-L^2}\lesssim 2^{j(2m-n(\rho-1))}.$$
 By a standard dual argument, we have
 $$\|S_{\phi,a_j}\|_{L^2-L^2}=\|S_{\phi,a_j}S^*_{\phi,a_j}\|^{\frac 12}_{L^2-L^2}\lesssim 2^{j(m-\frac{n}{2}(\rho-1))}.$$
 It follows from the Plancherel theorem that
 \begin{align*}
 \|T_{\phi,a_j}f\|_2=\|S_{\phi,a_j}\widehat{f}\|_2\lesssim 2^{j(m-\frac{n}{2}(\rho-1))}\|\widehat{f}\|_2=2^{j(m-\frac{n}{2}(\rho-1))}\|f\|_2.
 \end{align*}
 This finishes the proof.

 \section*{}

 \noindent Xiaofeng Ye\\
 Department of Mathematics and Information Computing,\\ East China Jiaotong University, Nanchang, 310013, P. R. China\\
  Email: xiaofye@163.com\\

 \noindent Chunjie Zhang\\
 Zhejiang University Press, Hangzhou, 310028, China\\
 Email: cjzhang@zju.edu.cn\\

 \noindent Xiangrong Zhu\\
 School of Mathematical Sciences, Zhejiang Normal University,\\  Jinhua 321004, P.R. China\\
 Email: zxr@zjnu.cn

\end{document}